\documentclass{amsart}

\usepackage{mabliautoref}
\usepackage{joe-custom}

\providecommand{\rddown}[1]{\left\lfloor #1\right\rfloor}

\begin{document}

\title{The cone theorem for effective fourfold pairs in characteristic $p>5$}
\author{Joe Waldron}
\address{Department of Mathematics, Michigan State University, East Lansing, MI 48824, USA}
\email{waldro51@msu.edu}

\begin{abstract}
Assuming the log resolution conjecture for all log pairs birational to $X$, we
prove the cone theorem for projective log canonical,
$\mathbb{Q}$-factorial fourfold pairs $(X,\Delta)$ such that $K_X+\Delta\equiv M\geq 0$ over bases of positive and mixed characteristic $p>5$. 
\end{abstract}

\maketitle

\section{Introduction}

The log minimal model program is now known in broad generality for threefolds.
Over algebraically closed fields of characteristic $p>5$, the theory was
developed in
\cite{hacon_three_2015,cascini_base_2015,xu_base-point-free_2015,
birkar_existence_2016,birkar_existence_2017,waldron_lmmp_2017}, and it was
extended to log canonical pairs over perfect fields and then to arbitrary
fields in \cite{hashizume_nakamura_tanaka,das_waldron_imperfect,fever_dream}.
Relative results in
arbitrary positive characteristic and the full characteristic-five program are
proved in \cite{hacon_low,hacon_five}.  In mixed characteristic, see
\cite{takamatsu_yoshikawa,characteristic_five,BMPSTWW1}.  One of the earliest
positive characteristic contributions was Keel's proof of the cone and
basepoint free theorems for threefolds
with $K_X+\Delta\equiv M\geq 0$.  The purpose of this note is to prove a 
four-dimensional analogue of Keel's version of the cone theorem
\cite[Theorem 0.6]{keel_basepoint_1999}.

The LMMP is already partially known in dimension four \cite{hacon_four, xie_xue_fourfolds}: in particular the
relative minimal model program can be run over a $\mathbb{Q}$-factorial
fourfold so long as the exceptional locus is contained in the reduced
boundary, contingent on the existence of log resolutions.  This relative MMP forms a key part of our proof.

\begin{theorem}\label{thm:main-cone}
Let $R$ be a finite-dimensional excellent ring admitting a dualizing complex, with no primes with residue characteristics $2$, $3$, or $5$,
and let $U$ be quasi-projective over $\operatorname{Spec}R$.  Let $(X,\Delta)$ be a  $\mathbb{Q}$-factorial log canonical pair of dimension four,
projective over $U$, with $\Delta$ a $\mathbb{Q}$-boundary.  Suppose that the
log-resolution \autoref{conj:res} holds for $X$ and that
\[
  K_X+\Delta\equiv_U M
\]
for an effective $\mathbb{Q}$-divisor $M$.  Then there are countably many
$(K_X+\Delta)$-negative extremal rays $R_i$ such that
\begin{enumerate}
\item
\[
 \overline{NE}(X/U)=
 \overline{NE}(X/U)_{K_X+\Delta\geq 0}+\sum_i R_i.
\]
\item The rays $R_i$ do not accumulate in
$\overline{NE}(X/U)_{K_X+\Delta<0}$.
\item Each $R_i$ is generated by an integral curve $C_i$ over $U$ which is the image of a rational curve in the sense of \autoref{def:geometric-rational-image} and
satisfies
\[
 -6d_{C_i}\leq (K_X+\Delta)\cdot C_i<0.
\]

\end{enumerate}
Moreover, for every $U$-ample $\mathbb{Q}$-divisor $A$, only finitely many of the
rays $R_i$ are $(K_X+\Delta+A)$-negative.
\end{theorem}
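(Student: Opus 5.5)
We follow Keel's strategy: reduce everything to $\operatorname{Supp} M$, which is where $(K_X+\Delta)$-negative curves must live, and handle the components of $M$ using lower-dimensional results. Write $M=\sum m_j M_j$ with $m_j>0$. Any curve $C$ with $(K_X+\Delta)\cdot C<0$ satisfies $M\cdot C<0$, so $C\subset M_j$ for some $j$ with $M_j\cdot C<0$. The same holds for classes: if $\gamma\in\overline{NE}(X/U)$ and $(K_X+\Delta)\cdot\gamma<0$, then $\gamma$ is a limit of effective cycles. Splitting those cycles into their parts inside and outside $\operatorname{Supp}M$, only the part inside $\operatorname{Supp}M$ can be $M$-negative. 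This should give
\[
\overline{NE}(X/U)=\overline{NE}(X/U)_{K_X+\Delta\geq 0}+\sum_j \operatorname{im}\bigl(\overline{NE}(M_j/U)\to \overline{NE}(X/U)\bigr).
\]
It is cleaner to first perturb: replace $\Delta$ by $\Delta+\epsilon M$, which changes $K_X+\Delta$ only by a positive multiple, and reduce to the case $\operatorname{Supp}M\subseteq\operatorname{Supp}\lfloor\Delta\rfloor$. In the non-lc locus this is not directly possible, so we instead pass to a dlt modification.

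\textbf{Step 1 (dlt model).} By the log-resolution hypothesis (\autoref{conj:res}) and the relative fourfold MMP of \cite{hacon_four,xie_xue_fourfolds}, we construct a $\mathbb{Q}$-factorial dlt model $g\colon (Y,\Delta_Y+\operatorname{Supp}(g^*M)_{\mathrm{red}})\to X$. On $Y$ the components of $g^*M$ lie in the reduced boundary. We run an MMP over $X$ scaled so that the exceptional locus stays in the reduced boundary, as the cited results require. Cones push forward surjectively, so it suffices to prove the decomposition upstairs for a pair $(Y,\Gamma)$ with $K_Y+\Gamma\equiv g^*(K_X+\Delta)+(\text{effective, exceptional})$, supported in $\lfloor\Gamma\rfloor$. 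Then
\[
(K_Y+\Gamma)|_{S}=K_{S}+\operatorname{Diff}_S(\Gamma-S)
\]
for each component $S$.

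\textbf{Step 2 (adjunction to threefolds).} Each $S$ is a normal threefold. Its different is a boundary, and the pair is dlt, or lc after normalization. In residue characteristics $>5$ the cone theorem for lc threefold pairs is known over such bases by \cite{hashizume_nakamura_tanaka,das_waldron_imperfect,fever_dream} and the mixed characteristic references. That theorem supplies countably many rays on $S$, each generated by the image of a rational curve $C$ with $-6d_C\le (K_S+\Delta_S)\cdot C<0$. Since $(K_Y+\Gamma)\cdot C=(K_S+\Delta_S)\cdot C$ for $C\subset S$, we push the rays into $Y$ and then to $X$. The bound $-6d_C$ survives pushforward provided the curves are $g$-nonexceptional, or have the correct sign. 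We arrange this by taking the $M$-negative curves that lie on strict transforms, using the negativity lemma on the exceptional part.

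\textbf{Step 3 (combining).} Combining the finitely many components gives (1) and the length bound (3). Extremality follows by selecting the extremal ones among the countably many rays. For non-accumulation (2) and the finiteness for $K_X+\Delta+A$, we use the standard argument: the lengths are bounded below by $-6d$, and curves of bounded degree against an ample $A$ have classes in a discrete set. Hence only finitely many rays satisfy $(K_X+\Delta+A)\cdot C_i<0$, and (2) follows.

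The main obstacle is Step 1 together with the comparison in Step 2. The two pairs must be compared carefully: $K_Y+\Gamma$ versus $g^*(K_X+\Delta)$. Curves contracted by $g$, or lying in exceptional divisors, can carry $(K_Y+\Gamma)$-negative classes that push forward to zero or to the wrong side. We need the MMP over $X$ to terminate with exceptional locus contained in $\lfloor\Gamma\rfloor$, so that the hypotheses of \cite{hacon_four} apply. We also need to ensure that $g_*$ of a negative extremal ray of $S$ either vanishes or remains $(K_X+\Delta)$-negative with the same length bound. We would try first to choose $\Gamma$ so that $K_Y+\Gamma=g^*(K_X+\Delta)+\epsilon\,(\text{$g$-exceptional effective})$ with $\epsilon\to 0$, and then take limits.
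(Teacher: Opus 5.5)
There is a genuine gap, located exactly at the comparison you flag, and Step~1 does not close it.

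First, the relation $K_Y+\Gamma\equiv g^*(K_X+\Delta)+(\text{effective exceptional})$ cannot hold once the strict transforms $g^{-1}_*M_j$ have coefficient one in $\Gamma$. The difference then contains the non-exceptional term $\sum_j(1-\operatorname{coeff}_{M_j}\Delta)\,g^{-1}_*M_j$. The sign is also backwards. If $K_Y+\Gamma=g^*(K_X+\Delta)+E$ with $E\geq0$, the length bound descends only when $E\cdot C_Y\leq0$. But the only sign one can actually prove for such a divisor (when it lies over finitely many points) is $E\cdot R_Y\geq0$.

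The workable framework is Keel's, ray by ray. Choose a component $S$ of $M$ with $S\cdot R<0$ and raise only its coefficient. This is harmless because $(K_X+\Delta+bS)\cdot R<0$ and $(K_X+\Delta)\cdot C\geq(K_X+\Delta+bS)\cdot C$ for $C$ spanning $R$. Raising every $M_j$ instead can make the adjoint divisor positive on $R$.

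In dimension four, however, $(X,\Delta+bS)$ with $\operatorname{coeff}_S=1$ need not be log canonical. A dlt model then gives $g^*(K_X+\Delta+bS)=K_Y+\Gamma+F$ with $F\geq0$ lying over $\operatorname{Nlc}(X,\Delta+bS)$, which can be a curve or a surface. Everything hinges on $F\cdot R_Y\geq0$ for the lifted ray. That inequality yields both $(K_Y+\Gamma)\cdot R_Y<0$ and $(K_X+\Delta+bS)\cdot C\geq\frac1m(K_Y+\Gamma)\cdot C_Y$. When $g(\operatorname{Supp}F)$ is positive-dimensional, nothing prevents $F\cdot R_Y<0$. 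Curves inside $F$ need not be contracted by $g$, and the negativity lemma only controls curves that are.

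Your $\epsilon\to0$ fallback does not help either. If $K_Y+\Gamma-g^*(K_X+\Delta)$ is exceptional, the $M_j$ keep their coefficients from $\Delta$ and do not lie in $\lfloor\Gamma\rfloor$, so there is no adjunction. Moreover, a limit over varying models does not produce an integral curve spanning the ray.

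The paper's fix is to never create positive-dimensional non-lc centers.
\begin{enumerate}
\item It takes a crepant dlt modification and disposes of the case where a component of $\lfloor\Delta\rfloor$ is negative on $R$ via \autoref{lem:boundary-ray}.
\item It perturbs to a klt pair and raises the coefficient of $S$ only to the codimension-three threshold $a_1=c(X,\Delta;S)$. Then $\operatorname{Nlc}(X,\Delta+a_1S)$ is a finite set of closed points, so $F$ lies over points.
\item If $F\cdot R_Y<0$, then $R_Y$ would lie in the image of $\overline{NE}(F^\nu/U)$. This image is killed by $g_*$, contradicting $g_*R_Y=R\neq0$. Hence $F\cdot R_Y\geq0$.
\item Since $S$ may still have coefficient below one, it iterates: take a dlt model, lift the ray, take the new threshold $a_2>a_1$, and so on. This stops once some coefficient-one component is negative on the lifted ray.
\item Termination follows from ACC for three-dimensional log canonical thresholds \cite[Theorem 1.5]{fever_dream}. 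It is applied at a codimension-three point computing each $a_i$, with coefficients in a fixed finite set.
\item The bound then descends through the tower using $F_i\cdot C_i\geq0$, $S_{i-1}\cdot C_{i-1}<0$, $a_i\geq a_{i-1}$ and \autoref{lem:curve-index-pushforward}. This is \autoref{prop:positive-threshold}.
\end{enumerate}
Without this, or some other control on the non-lc locus, your Steps~1--2 do not produce curves of bounded length in the $(K_X+\Delta)$-negative rays. Your Step~3 is fine and agrees with the paper's formal deduction.
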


We also obtain some immediate corollaries, namely the rationality of nef thresholds and the existence of pl contractions, the latter of which was already reduced to the cone theorem by \cite[Proposition 4.5]{hacon_four}:

\begin{corollary}
\label{cor:rationality}
In the setting of \autoref{thm:main-cone}, suppose that $K_X+\Delta$ is not
nef over $U$.  Let $H$ be a $U$-ample Cartier divisor, and let $a$ be a
positive integer such that $a(K_X+\Delta)$ is Cartier.  Then
\[
 r:=\sup\{t\in\mathbb R_{\geq0}\mid
              H+t(K_X+\Delta)\text{ is nef over }U\}
\]
is a positive rational number.  More precisely, if $r=u/v$ in lowest terms,
then $0<v\leq6a$.  
\end{corollary}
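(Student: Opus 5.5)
The plan is to deduce the corollary from \autoref{thm:main-cone} via the standard Kawamata-type argument. The first step is to check that $r$ is positive and finite. Since $K_X+\Delta$ is not nef over $U$, there is some $(K_X+\Delta)$-negative extremal ray, so $H+t(K_X+\Delta)$ fails to be nef for $t\gg0$ and $r<\infty$. For positivity, fix a small rational $\epsilon>0$ and apply the final clause of \autoref{thm:main-cone} with $A=\epsilon H$. Only finitely many rays $R_1,\dots,R_k$ are $(K_X+\Delta+\epsilon H)$-negative. On all other rays, and on $\overline{NE}(X/U)_{K_X+\Delta\ge0}$, the divisor $H+t(K_X+\Delta)$ is nonnegative for $t\le 1/\epsilon$. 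Indeed, on the part where $K_X+\Delta\ge0$ this is clear. On a ray $R$ with $(K_X+\Delta)\cdot R<0$ and $(K_X+\Delta+\epsilon H)\cdot R\ge0$, we get $\tfrac1\epsilon (K_X+\Delta)\cdot R\ge -H\cdot R$, so $(H+t(K_X+\Delta))\cdot R\ge0$ for all $t\le1/\epsilon$. Each of the finitely many $R_j$ has $H\cdot C_j>0$, so a small $t$ keeps all of them nonnegative. Hence $r>0$.

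The second step is to show that $r$ is computed by one of the rays. By the cone decomposition in \autoref{thm:main-cone}(1), nefness of $H+t(K_X+\Delta)$ is tested on $\overline{NE}(X/U)_{K_X+\Delta\ge0}$, where it is automatic for $t\ge0$, and on the rays $R_i$. We therefore get $r=\inf_i \frac{H\cdot C_i}{-(K_X+\Delta)\cdot C_i}$. By the argument above with $\epsilon<1/r$, every ray outside the finite set $R_1,\dots,R_k$ gives a ratio at least $1/\epsilon>r$. So the infimum is a minimum over finitely many rays, attained at some $C=C_j$, and $r=\frac{H\cdot C}{-(K_X+\Delta)\cdot C}$.

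The final step bounds the denominator. Write $(K_X+\Delta)\cdot C=\frac1a\, a(K_X+\Delta)\cdot C$. The key point is that $a(K_X+\Delta)\cdot C$ and $H\cdot C$ are integer multiples of $d_C$, the degree invariant appearing in \autoref{def:geometric-rational-image}. I expect this follows from that definition and from intersection numbers over a field of definition being divisible by $d_C=\dim_k H^0(C,\mathcal O_C)$ or the analogous invariant. Set $H\cdot C=d_C h$ and $-a(K_X+\Delta)\cdot C=d_C m$ with $h,m$ positive integers. The bound in \autoref{thm:main-cone}(3) gives $d_C m\le 6a\,d_C$, so $m\le 6a$. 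Then $r=ah/m$, and in lowest terms the denominator $v$ divides $m$, so $0<v\le 6a$.

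The main obstacle is this divisibility claim. It must hold for an integral curve over a possibly non-closed or mixed-characteristic base, using the precise meaning of $d_{C}$. I would verify it directly from \autoref{def:geometric-rational-image}: the degree of a Cartier divisor on a proper curve over a field $k$ is divisible by $\dim_k H^0(C,\mathcal O_C)$ in the appropriate sense.
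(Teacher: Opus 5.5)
Your argument is correct, and its final step (writing $H\cdot C=d_Ch$ and $-a(K_X+\Delta)\cdot C=d_Cm$, then using $m\le 6a$) is exactly the paper's. The difference is in how you show that $r=\inf_i\frac{H\cdot C_i}{-(K_X+\Delta)\cdot C_i}$ is attained. You invoke the local finiteness clause of \autoref{thm:main-cone} with $A=\epsilon H$ and $\epsilon<1/r$ to discard all but finitely many rays.

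The paper instead notes that every ratio equals $an_i/m_i$ with $n_i\in\mathbb{Z}_{>0}$ and $m_i\in\{1,\ldots,6a\}$. Every ratio therefore lies in the discrete set $\frac{1}{(6a)!}\mathbb{Z}_{>0}$, so the infimum is automatically a positive minimum. Positivity, attainment and the bound $v\le 6a$ then all follow at once from integrality and the length bound, and your Step 1 becomes unnecessary.

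Your route separates the issues a bit more cleanly. Rationality of $r$ already follows from attainment, since $H$ and $a(K_X+\Delta)$ are Cartier, and the curve index is needed only for $v\le 6a$. However, the finiteness clause is itself proved in the paper from the length bound and the discreteness of normalized classes (\autoref{lem:normalized-classes-discrete}). So both routes rest on the same input, and the paper's is simply shorter.

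The step you flag as the main obstacle is not one. $d_C$ is the curve index defined by \eqref{eq:curve-index}, in the definition just before \autoref{def:geometric-rational-image}: $L\cdot C=d_C\,(\varphi_u^*L\cdot\overline C)$, where the right-hand intersection number is an integer. Divisibility of $H\cdot C$ and $a(K_X+\Delta)\cdot C$ by $d_C$ is therefore built into the definition.

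Do not try to prove it through $\dim_{\kappa(u)}H^0(C,\mathcal O_C)$, because that number can be strictly smaller than $d_C$. For instance, $C=\{x^p=tz^p\}\subset\mathbb P^2_{\kappa(u)}$ with $t\notin\kappa(u)^p$ has $h^0(\mathcal O_C)=1$ but $d_C=p$. Divisibility by $h^0(\mathcal O_C)$ would only give $m\le 6a\,d_C/h^0(\mathcal O_C)$. The argument needs divisibility by the same $d_C$ that appears in the length bound, and \eqref{eq:curve-index} supplies exactly that.
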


\begin{corollary}\label{cor:pl-contraction}
In the setting of \autoref{thm:main-cone}, suppose in addition that
$(X,\Delta)$ is dlt.  Let $R\subset \overline{NE}(X/U)$ be a
$(K_X+\Delta)$-negative extremal ray, and suppose that
\(
 S\cdot R<0
\)
for some irreducible component $S$ of $\lfloor\Delta\rfloor$.  Then there is a
normal scheme $Z$, projective over $U$, and a projective contraction over $U$
\[
 f\colon X\longrightarrow Z
\]
such that $\rho(X/Z)=1$, and an irreducible
curve $C$ over $U$ is contracted if and only if $[C]\in R$.
\end{corollary}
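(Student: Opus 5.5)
We would deduce \autoref{cor:pl-contraction} from \autoref{thm:main-cone}, following the reduction in \cite[Proposition 4.5]{hacon_four}. The first step is to make the ray $R$ supportable. By the ``Moreover'' clause of \autoref{thm:main-cone}, for a $U$-ample $A$ only finitely many $(K_X+\Delta+A)$-negative rays exist. Combined with non-accumulation, this shows that $R$ is an exposed extremal ray. Concretely, choose a $U$-ample $\mathbb{Q}$-Cartier $H$ and, using \autoref{cor:rationality} with $H$ perturbed generically, a rational $r>0$ such that $L:=H+r(K_X+\Delta)$ is nef over $U$ and $L^\perp\cap\overline{NE}(X/U)=R$. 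The standard argument goes as follows. Start from an ample $H_0$. Among the finitely many rays on which $H_0+\epsilon^{-1}(K_X+\Delta)$ is negative, pick a functional vanishing exactly on $R$. This is possible because rays in the cone decomposition are extremal, and there are only finitely many competitors outside a neighborhood of $\overline{NE}_{K_X+\Delta\ge 0}$.

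Next, we put $L$ in a form where a relative MMP or basepoint-freeness applies. Since $S\cdot R<0$, every curve with class in $R$ lies in $S$. So $L$ is nef and big over $U$, and $\mathbb{E}(L)$, the union of $L$-trivial subvarieties, is contained in $S\subset\lfloor\Delta\rfloor$. More precisely, we write
\[
L \sim_{\mathbb{Q},U} K_X+\Delta+(\text{ample})+ \epsilon S\ \text{perturbations},
\]
so that $L-(K_X+\Delta)$ is ample while $L-\epsilon S$ stays positive on $R$ and away from it. We then apply Keel-type semiampleness: $L$ is semiample over $U$ iff $L|_{\mathbb{E}(L)}$ is. In mixed characteristic one uses the version in \cite{characteristic_five,BMPSTWW1}, with the restriction to residue characteristic $p>0$ fibers; in characteristic zero fibers the classical basepoint-free theorem is available. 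This reduces the problem to showing $L|_S$ is semiample.

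On $S$ we use adjunction. Since $(X,\Delta)$ is dlt, write $(K_X+\Delta)|_{S^\nu}=K_{S^\nu}+\Delta_{S^\nu}$, where $S$ is normal by dlt-ness and $\Delta_S$ is dlt. Then $L|_S=H|_S+r(K_S+\Delta_S)$ is a nef and big adjoint-type divisor on a threefold dlt pair over $U$. Semiampleness of $L|_S$ follows from the basepoint-free theorem for threefold dlt pairs in characteristic $p>5$ and mixed characteristic \cite{birkar_existence_2016,characteristic_five,BMPSTWW1}, using that $L|_S-(K_S+\Delta_S)$ is big and nef. Alternatively, following \cite{hacon_four}, we run the relative $(K_X+\Delta)$-MMP over $U$ with scaling. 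Its steps only contract loci inside $\lfloor\Delta\rfloor$, which is permitted by the relative fourfold MMP when the exceptional locus lies in the reduced boundary. The contraction $f\colon X\to Z$ is then the Stein factorization of the morphism given by $|mL|$. We get $\rho(X/Z)=1$ because $L^\perp\cap\overline{NE}(X/U)=R$. A curve is contracted iff $L\cdot C=0$ iff $[C]\in R$, and $Z$ is normal and projective over $U$.

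The main obstacle is the semiampleness step. Keel's theorem needs $\mathbb{E}(L)$ handled by a gluing argument, since $S$ need not be all of $\mathbb{E}(L)$ scheme-theoretically. In mixed characteristic, one must also control the characteristic-zero fibres separately. We would first try the relative-MMP route of \cite[Proposition 4.5]{hacon_four} and simply verify that its only missing input was the cone theorem.
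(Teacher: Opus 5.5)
Your proposal ends up where the paper does: the paper's proof is the one-line reduction that \autoref{thm:main-cone} supplies the missing hypothesis of \cite[Proposition 4.5]{hacon_four} (its local finiteness is what makes $R$ supported by a nef divisor of the form $H+r(K_X+\Delta)$), and your Keel/adjunction sketch is extra detail that this citation absorbs. Drop one aside: a $(K_X+\Delta)$-MMP over $U$ is not what \autoref{thm:relative-fourfold-mmp} provides, since that theorem needs a birational $\mathbb{Q}$-factorial base with $\operatorname{Ex}(\pi)\subseteq\lfloor\Delta\rfloor$, but nothing in your argument depends on it.
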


\subsection{Outline of the proof}
As in Keel's proof, given a $(K_X+\Delta)$-negative extremal ray $R$, the aim is
to find a curve in $R$ satisfying the length bound in \autoref{thm:main-cone}(3).  The argument starts
by noting that $R\cdot S<0$ for some component $S$ of $M$.  If $S$ is also a
component of $\rddown{\Delta}$, then we are done by applying adjunction and the
cone theorem in dimension three.  In Keel's threefold situation this could be forced by adding enough of $M$ to raise the coefficient of $S$ to $1$,
while ignoring the resulting singularities, because the cone theorem applies
on surfaces very generally.  For us this is not good enough, because we need
to ensure that our threefold remains dlt after restriction.  So the aim of the
proof is to use suitable birational modifications to reduce to the case where
our extremal ray is negative for a component of $\rddown{\Delta}$.

To do so, we may assume that $R\cdot T\geq 0$ for every component $T$ of
 $\rddown{\Delta}$, otherwise we are done.  This means we may replace $\Delta$ by $\Delta-\epsilon \rddown{\Delta}$ to assume that $(X,\Delta)$ is klt, and also fix a component $S$ of $M$ such that $R\cdot S<0$.  
  We have that
$\lambda=\lct(X,\Delta,S)>0$, and let $X'$ be a dlt modification of
$(X,\Delta+\lambda S)$.  We fix an extremal ray $R'$ on $X'$ dominating $R$,
and may continue the process by replacing $X$ and $R$ by $X'$ and $R'$.  If we do not eventually reach a situation where
$R\cdot T<0$ for $T$ a component of $\rddown{\Delta}$, then we generate an
infinite increasing chain of log canonical thresholds. Unfortunately, the ascending chain condition for
log canonical thresholds is not known in dimension four.  However, we do know that it holds at
codimension three points by \cite[Theorem 1.5]{fever_dream}.  Therefore
instead of taking log canonical thresholds, we use log canonical thresholds
in codimension three.  This results in the pullbacks not being dlt, and some
divisors over closed points pick up coefficients greater than one.  However, the lifted extremal rays are not contracted by the
birational maps, so the resulting divisors have nonnegative
intersection with them, so the resulting inequality works in our favor.

\begin{remark}
	All occurrences of the number $6=2\cdot(\dim(X)-1)$ in bounds on lengths of extremal rays in this paper can be replaced by the
	better bound $4=(\dim(X)-1)+1$ by substituting the optimal bend-and-break result
	\cite[Theorem 1.1]{riedl_bend_and_break} into the proofs of the cited
	threefold cone theorems.  We have refrained from doing so explicitly because
	the original references use the original, weaker bound \cite{kollar_rational_1996}.
\end{remark}

\subsection*{Acknowledgments}
Waldron was supported by NSF CAREER Grant DMS \#2440240, NSF Grant DMS \#2401279 and the Simons Foundation Gift ID \#850684.  He would like to thank Zsolt Patakfalvi and Jakub Witaszek for helpful conversations related to this work. 

\subsection*{AI Statement}
I worked out the outline of this proof with all essential ingredients, in Spring 2025.  Unfortunately, there remained one gap that I could not repair.  I gave the unfinished proof to ChatGPT 5.6 Sol in Summer 2026, asking it to fill the gap.  It modified the approach to avoid the issue, which after further editing by hand and with Codex, resulted in the current version. 

\section{Preliminaries}

Throughout the paper, our base scheme $U$ will be quasi-projective over an excellent ring, which is of finite Krull dimension, admits a dualizing complex, and has no residue fields of characteristic $2$, $3$ or $5$.  Dimension will always refer to the dimension of the scheme, not relative dimension. 
We use standard terminology around the LMMP and related singularity classes, see \cite{kollar_birational_1998, kollar_singularities_2013}, and \cite{BMPSTWW1} for more details in our context.

\subsection{Curves and the cone of curves}

Unless another base is displayed, numerical equivalence, nefness, ampleness,
bigness, and semiampleness are relative to $U$.  A \emph{curve over $U$} is a
proper integral curve $C\subset X$ whose image is a closed point $u\in U$.
If $L$ is a Cartier divisor on $X$, we write
\[
 L\cdot C:=\deg_{\kappa(u)}(L|_C).
\]
Let $N_1(X/U)$ be the real vector space generated by curves over $U$, modulo
numerical equivalence against Cartier divisors on $X$, and let
$\overline{NE}(X/U)$ be the closure of the cone generated by their classes.

\begin{definition}
Let $C\subset X$ be a curve over $U$, let $u\in U$ be its image, and set
\[
 X_{\overline u}:=X\times_U\operatorname{Spec}\overline{\kappa(u)}.
\]
Let $\varphi_u:X_{\overline u}\to X$ be the projection.  If $\overline C$ is
an integral component of
$(C_{\overline{\kappa(u)}})_{\mathrm{red}}$, there is a unique positive
integer $d_C$, independent of the choice of $\overline{C}$, such that
\begin{equation}\label{eq:curve-index}
 L\cdot C=d_C\bigl(\varphi_u^*L\cdot_{\overline{\kappa(u)}}\overline C\bigr)
\end{equation}
for every Cartier divisor $L$ on $X$; see
\cite[Lemma~2.47]{BMPSTWW1} and
\cite[Lemmas~3.6 and 4.1]{das_waldron_imperfect}.  In particular,
$L\cdot C$ is divisible by $d_C$.  We call $d_C$ the \emph{curve index} and
write
\[
 \langle C\rangle:=\frac{[C]}{d_C}\in N_1(X/U)
\]
for the normalized numerical class.
\end{definition}

\begin{definition}\label{def:geometric-rational-image}
An integral curve $C\subset X$ over a closed point $u\in U$ is a
\emph{geometric rational image} if there is a nonconstant morphism
\(
 \mathbb P^1_{\overline{\kappa(u)}}
 \longrightarrow (X_{\overline u})_{\mathrm{red}}
\)
whose composite with $(X_{\overline u})_{\mathrm{red}}\to X$ has 
image $C$.
\end{definition}

\begin{lemma}\label{lem:curve-index-pushforward}
Let $g:Y\to X$ be a proper morphism over $U$, let $D\subset Y$ be a curve
over $U$ which is not contracted, and let
$C=g(D)_{\mathrm{red}}$.
If $g_*D=mC$, then
\(
 d_D\leq m d_C.
\)
\end{lemma}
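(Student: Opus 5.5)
We compare degrees of a single Cartier divisor on $D$ and on $C$ after base change to $\overline{\kappa(u)}$, where $u\in U$ is the common image of $D$ and $C$. The starting point is the projection formula for proper pushforward of cycles: for a Cartier divisor $L$ on $X$,
\[
 g^*L\cdot D=L\cdot g_*D=m\,(L\cdot C),
\]
where both sides are degrees over $\kappa(u)$. (If $X$ is not projective over $U$, work with a $U$-ample $L$ on a neighbourhood, or pass to the closure of $C$ in a projective model; the statement is local around $C$.)

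Next, base change to $\overline{\kappa(u)}$. Let $g_{\overline u}:Y_{\overline u}\to X_{\overline u}$ be the induced morphism, and choose an integral component $\overline D$ of $(D_{\overline{\kappa(u)}})_{\mathrm{red}}$. Its image $g_{\overline u}(\overline D)$ dominates $C$, since $D$ is not contracted, so $g_{\overline u}(\overline D)$ is an integral component $\overline C$ of $(C_{\overline{\kappa(u)}})_{\mathrm{red}}$. Let $e\ge 1$ be the degree of $\overline D\to\overline C$. Over the algebraically closed field the projection formula gives
\[
 \varphi_u^*g^*L\cdot\overline D=e\,(\varphi_u^*L\cdot\overline C).
\]
Applying \eqref{eq:curve-index} on $Y$ and on $X$ then yields
\[
 d_D\,e\,(\varphi_u^*L\cdot\overline C)=g^*L\cdot D=m\,d_C\,(\varphi_u^*L\cdot\overline C).
\]
Choose $L$ ample near $C$, so that $\varphi_u^*L\cdot\overline C>0$, and divide. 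This gives
\[
 d_D\,e=m\,d_C,
\]
and since $e\ge1$ we conclude $d_D\le d_De=m\,d_C$.

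The only step needing care is the compatibility used above: the curve index is independent of the choice of component, and base change to $\overline{\kappa(u)}$ commutes with $g$. The latter holds because $g_{\overline u}$ is just $g\times_U\overline{\kappa(u)}$ restricted over $u$, and the former is exactly the content of the cited lemmas defining $d_C$.

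If no ample divisor is available, the same identity can be obtained in a different way. Take any Cartier $L$ on $X$ with $L\cdot C\ne0$; such an $L$ exists, for instance the pullback of a divisor that is very ample locally over $U$. Then divide as before.
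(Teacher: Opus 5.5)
Your proof is correct and follows the paper's argument essentially verbatim. Both combine the projection formula with \eqref{eq:curve-index} on $Y$ and on $X$ to get $e\,d_D(\varphi_X^*H\cdot\overline C)=m\,d_C(\varphi_X^*H\cdot\overline C)$ for a $U$-ample $H$, then divide by the nonzero degree and use $e\geq1$; your hedging about non-projective $X$ in the last paragraph is unnecessary (and not fully justified as stated), since here $X$ is projective over $U$ and one simply takes a $U$-ample Cartier divisor.
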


\begin{proof}
The two curves lie over the same closed point $u\in U$.  Choose compatible irreducible components $\overline{C}$ and $\overline{D}$ of 
$(C_{\overline{\kappa(u)}})_{\mathrm{red}}$ and 
$(D_{\overline{\kappa(u)}})_{\mathrm{red}}$ for which the induced map
$\overline g:\overline D\to\overline C$ has degree $e\geq1$.  Let $H$ be a
$U$-ample Cartier divisor on $X$, and let $\varphi_X:X_{\overline u}\to X$ and
$\varphi_Y:Y_{\overline u}\to Y$ be the projections.  Equation
\eqref{eq:curve-index} and the projection formula give
\[
\begin{aligned}
 m d_C(\varphi_X^*H\cdot\overline C)
 &=m(H\cdot C)
  =H\cdot g_*D
  =g^*H\cdot D\\
 &=d_D(\varphi_Y^*g^*H\cdot\overline D)
  =e d_D(\varphi_X^*H\cdot\overline C).
\end{aligned}
\]
Since $\varphi_X^*H\cdot\overline C\neq 0$, we have
$m d_C=e d_D$ and the inequality follows.  
\end{proof}

\begin{lemma}
	\label{lem:normalized-classes-discrete}
	Let $X\to U$ be projective and let $H$ be a $U$-ample Cartier divisor.  For
	every $b>0$, the set
	\[
	\left\{\langle C\rangle\ \middle|\
	C\text{ is a curve over }U,\quad H\cdot\langle C\rangle\leq b\right\}
	\]
	is finite.
\end{lemma}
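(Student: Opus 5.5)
We reduce to the standard finiteness of numerical classes of curves of bounded degree, made uniform by passing to geometric fibres. Since $H\cdot\langle C\rangle = H\cdot C/d_C = \varphi_u^*H\cdot_{\overline{\kappa(u)}}\overline C$ by \eqref{eq:curve-index}, the condition $H\cdot\langle C\rangle\leq b$ says that the geometric component $\overline C\subset X_{\overline u}$ has $\varphi_u^*H$-degree at most $b$. Moreover, for any Cartier divisor $L$ on $X$ we have $L\cdot\langle C\rangle=\varphi_u^*L\cdot\overline C$. So $\langle C\rangle$ is determined by the functional $L\mapsto \varphi_u^*L\cdot\overline C$ on Cartier divisors, and it suffices to show that only finitely many such functionals arise as $u$ ranges over closed points of $U$ and $\overline C$ over curves in $X_{\overline u}$ of $H$-degree at most $b$.

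\textbf{Step 1: reduce to finitely many test divisors.} Since $N_1(X/U)$ is finite-dimensional, pick Cartier divisors $L_1,\dots,L_\rho$ whose classes span $N^1(X/U)$. We may write each $L_j=A_j-B_j$ with $A_j,B_j$ very ample over $U$, say by adding a high multiple of $H$. A class in $N_1(X/U)$ is determined by its intersections with the $L_j$. It therefore suffices to bound the finitely many integers $A_j\cdot\langle C\rangle$ and $B_j\cdot\langle C\rangle$. These are nonnegative, since $A_j,B_j$ are ample, and they are integers, since the intersections are computed on $\overline C$ over an algebraically closed field.

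\textbf{Step 2: the uniform bound.} We need a constant $c$, independent of $u$, such that $A\cdot\overline C\leq c\,(H\cdot\overline C)$ for every ample $A$ among the $A_j,B_j$. Choose $m$ with $mH-A$ globally generated over $U$; this is possible since $H$ is $U$-ample and $U$ is quasi-projective over a Noetherian base. Then $mH-A$ pulls back to a globally generated divisor on every geometric fibre, so $(mH-A)\cdot\overline C\geq 0$. Hence $0\leq A\cdot\langle C\rangle\leq mb$. Consequently each coordinate of $\langle C\rangle$ lies in the finite set $\{-mb,\dots,mb\}$, and the set of classes is finite.

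The main obstacle I expect is the bookkeeping for non-closed residue fields and mixed characteristic, namely checking that intersection numbers on $X_{\overline u}$ are honest integers and that global generation descends to every geometric fibre uniformly. Both follow from flat base change of global sections and from the statement around \eqref{eq:curve-index} that $d_C$ divides $L\cdot C$. The latter gives integrality of $L\cdot\langle C\rangle$ directly, even without passing to $\overline{\kappa(u)}$.
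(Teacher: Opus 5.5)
Your proof is correct and is essentially the paper's argument: fix a basis $L_1,\dots,L_\rho$ of $N^1(X/U)$, bound $|L_j\cdot\langle C\rangle|$ by a constant multiple of $H\cdot\langle C\rangle$, and conclude from the integrality of $L\cdot\langle C\rangle$ given by \eqref{eq:curve-index}. The only differences are cosmetic. For the bound, the paper uses that $H\pm\epsilon L_j$ is $U$-ample, while you use $L_j=A_j-B_j$ with $mH-A_j$ and $mH-B_j$ globally generated. For finiteness, the paper intersects a compact slice with the discrete lattice $\Lambda_{X/U}$, which amounts to your integer box. Your passage to geometric fibres is not needed, since $U$-ampleness (or nefness) of $mH-A$ already gives $(mH-A)\cdot C\geq 0$ directly.
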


\begin{proof}
	Choose Cartier divisors
	\(
	L_1,\ldots,L_\rho
	\)
	whose numerical classes form an $\mathbb R$-basis of $N^1(X/U)$.  Numerical
	intersection defines an injective linear map
	\[
	\iota:N_1(X/U)\longrightarrow\mathbb R^\rho,
	\qquad
	\alpha\longmapsto(L_1\cdot\alpha,\ldots,L_\rho\cdot\alpha).
	\]
	Equation \eqref{eq:curve-index} shows that every normalized curve class lies
	in the discrete set
	\[
	\Lambda_{X/U}:=\{\alpha\in N_1(X/U)\mid
	L\cdot\alpha\in\mathbb Z\text{ for every Cartier divisor }L\text{ on }X\},
	\]
	whose image under $\iota$ is contained in $\mathbb Z^\rho$.
	
	Since the relative ample cone is open, there is an $\epsilon>0$ such that
	$H\pm\epsilon L_i$ is $U$-ample for every $i$.  Hence
	\[
	|L_i\cdot\alpha|\leq\epsilon^{-1}H\cdot\alpha
	\qquad\text{for every }\alpha\in\overline{NE}(X/U).
	\]
	Consequently, if 
	\[
	\beta\in \Theta:=\{\alpha\in\overline{NE}(X/U)\mid H\cdot\alpha\leq b\}
	\]
	then we have $|L_i\cdot\beta|\leq \frac{b}{\epsilon}$ for all $i$, hence the image of $\Theta$ under $\iota$ is contained in the box $[-\frac{b}{\epsilon},\frac{b}{\epsilon}]^\rho$.  Hence the above set is bounded, and since it is also closed, we conclude that it is compact. Therefore its intersection with the discrete set $\Lambda_{X/U}$ is finite.
\end{proof}

\begin{lemma}
\label{lem:cone-pushforward}
Let $f:Y\to X$ be a projective surjective morphism of integral schemes
projective over $U$.  Then
\[
 f_*\overline{NE}(Y/U)=\overline{NE}(X/U).
\]
\end{lemma}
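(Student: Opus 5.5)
We prove the two inclusions separately, with surjectivity onto the cone of curves as the main point. Pushforward of $1$-cycles respects numerical equivalence: for $\alpha\in N_1(Y/U)$ and $L$ Cartier on $X$, the projection formula $L\cdot f_*\alpha=f^*L\cdot\alpha$ holds, and numerically trivial classes stay trivial. So $f_*:N_1(Y/U)\to N_1(X/U)$ is a well-defined linear map between finite-dimensional spaces and hence continuous. If $D$ is a curve over $U$, then $f_*D$ is either $0$ or a positive multiple of the curve $f(D)$. Therefore $f_*NE(Y/U)\subseteq NE(X/U)$, and by continuity $f_*\overline{NE}(Y/U)\subseteq\overline{NE}(X/U)$.

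For the reverse inclusion we first show $NE(X/U)\subseteq f_*NE(Y/U)$, i.e.\ every curve $C$ over $U$ on $X$ is, up to a positive scalar, the pushforward of an effective curve on $Y$. Take a $U$-ample divisor $H_Y$ on $Y$, which exists since $Y$ is projective over $U$. Replace it by a sufficiently divisible multiple, and let $r=\dim Y-\dim X$ be the relative dimension. The preimage $f^{-1}(C)$ has every component dominating $C$ of dimension at least $r+1$, because $f$ is surjective and its fibres have dimension at least $r$. Choose a component $W$ of $f^{-1}(C)$ that surjects onto $C$. Cut $W$ by $\dim W-1$ general members of $|mH_Y|$, arranging that the intersection is a curve $\Gamma\subset W$ still meeting the generic fibre of $W\to C$. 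This is possible because the generic fibre of $W\to C$ is positive-dimensional and ample hypersurfaces meet it properly. Equivalently, one can work directly with a closed point $u$: the fibre $f^{-1}(C)\to C$ is proper and surjective, so a curve dominating $C$ exists. Then $\Gamma$ has a component $D$ mapping onto $C$, so $f_*D=eC$ with $e\geq1$, and hence $[C]=f_*(\tfrac1e[D])$.

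This is the main technical step. We must make sure that the general hyperplane sections exist over possibly finite residue fields, and that the curve lies over a closed point of $U$. Both are handled by working inside the fibre $Y_u$ over the closed point $u\in U$ below $C$, which is a projective scheme over the field $\kappa(u)$. The slicing can be replaced by the standard fact that a proper surjective morphism $W\to C$ onto a curve admits a closed integral curve in $W$ dominating $C$: take a closed point of the generic fibre and its closure. No general sections are needed, which avoids the finite-field issue.

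Finally, we pass to closures. $f_*\overline{NE}(Y/U)$ is the image of a closed cone under a linear map, so it need not be closed a priori. Closedness holds when $\overline{NE}(Y/U)$ meets $\ker f_*$ only in $0$, but that is false in general, since contracted curves lie in the kernel. Instead we use compactness as in \autoref{lem:normalized-classes-discrete}. Fix $f^*H+H_Y$, which is $U$-ample on $Y$ when $H$ is ample on $X$. Given $\beta\in\overline{NE}(X/U)$, write $\beta=\lim f_*\alpha_n$ with $\alpha_n\in NE(Y/U)$. By the previous paragraph we may take each $\alpha_n$ as a combination of curves $D$ with $H_Y\cdot D$ bounded linearly in terms of $H\cdot f_*D$; this follows from the construction via closed points of generic fibres of bounded degree. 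Then $\alpha_n$ lies in a compact slice of $\overline{NE}(Y/U)$, a subsequence converges to some $\alpha$, and $f_*\alpha=\beta$. The hardest point is securing this degree bound. If it fails, the fallback is to choose a $U$-ample $A$ and use the decomposition $\overline{NE}(Y/U)=\overline{NE}(Y/U)\cap\{\text{compact slice}\}$ together with the fact that the kernel cone is spanned by $f$-vertical classes, which can be discarded in the limit.
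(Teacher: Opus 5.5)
Your architecture matches the paper's: the easy inclusion via the projection formula, lifting each curve $C$ to a curve $D$ with $f_*D=eC$, and closedness by compactness. The first two steps are correct, and taking the closure of a closed point of the generic fibre of $W\to C$ does sidestep the finite-field issue. The gap is in the third step. Compactness needs a uniform linear bound: a constant $M$ such that every effective $1$-cycle $Z$ on $X$ whose components are curves over $U$ has an effective lift $Z'$ on $Y$ with $f_*Z'=Z$ and $H_Y\cdot Z'\leq M\,H_X\cdot Z$. You do not prove this.

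This bound is the whole content of the lemma beyond $NE(X/U)\subseteq f_*NE(Y/U)$, and your justification for it fails. The closure of an arbitrary closed point of the generic fibre of $W\to C$ has uncontrolled degree, even when its residue degree $e$ is $1$. Take $Y=X\times\mathbb{P}^1$, $H_Y=p_1^*H_X+p_2^*\mathcal{O}(1)$ and $C\cong\mathbb{P}^1$. A $\kappa(C)$-rational point of the generic fibre of $C\times\mathbb{P}^1\to C$ is a rational function $\phi$ on $C$, and its closure $D$ is the graph of $\phi$. Then $e=1$ but $H_Y\cdot D=H_X\cdot C+\deg\phi$, which is unbounded. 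So the lifts must be chosen carefully, uniformly in $C$.

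Your fallback does not supply this either. Subtracting $f$-vertical classes from an effective class need not keep you in $\overline{NE}(Y/U)$. Moreover, no purely formal convexity or duality argument can work:
\begin{itemize}
\item $\overline{NE}(Y/U)\cap\ker f_*$ is the face exposed by the nef class $f^*H$. Yet linear images of closed pointed cones with this property need not be closed: the cone $\{(a,b,c): a,b,c\geq0,\ b^2\leq ac\}$ projects along the $c$-axis onto $\{a>0,\ b\geq0\}\cup\{0\}$.
\item Duality only sees the closure of $f_*\overline{NE}(Y/U)$, which you already know equals $\overline{NE}(X/U)$.
\end{itemize}

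The missing geometric input is what the paper imports from \cite[Proposition~3.21 and Corollary~3.16]{fulger_positive_cones}. Fix once and for all a multisection $Y'\subseteq Y$, namely the closure of a closed point of the generic fibre of $f$ itself, not of $W\to C$. This reduces the problem to the generically finite morphism $Y'\to X$. Fulger--Lehmann's argument then lifts cycles along $Y'\to X$ with $H_Y$-degree bounded linearly in $H_X\cdot Z$, including curves lying in the locus where $Y'\to X$ is not finite. With such an $M$, your compactness argument goes through verbatim.
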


\begin{proof}
This is the relative version of \cite[Corollary~3.22]{fulger_positive_cones} with $k=1$, and their proof goes through mostly unchanged.  To this end, use the relative version of
\cite[Proposition~3.21]{fulger_positive_cones}.  More precisely, we choose $U$-ample Cartier divisors $H_X$ and $H_Y$ on $X$ and $Y$,
respectively. \cite[Proposition~3.21]{fulger_positive_cones} and \cite[Corollary~3.16]{fulger_positive_cones} say that there is a constant $M>0$ such that for every effective
real one-cycle $Z$ on $X$ whose components are curves over $U$, there is an
effective real one-cycle $Z'$ on $Y$ satisfying
\[
 f_*Z'=Z,
 \qquad
 H_Y\cdot Z'\leq M H_X\cdot Z.
\]
 The proof of \cite[Proposition~3.21]{fulger_positive_cones} carries over after
replacing the complete-intersection multisection used there by the closure in
$Y$ of a closed point of the generic fibre of $f$, and measuring degrees with
$U$-ample divisors. 
The proof of \cite[Corollary~3.22]{fulger_positive_cones} now applies. 
\end{proof}

\subsection{LMMP inputs}

\subsubsection{Resolution of singularities}

The exact conjecture regarding resolution of singularities we need to assume is the same as in \cite[Theorem 4.1]{hacon_four}:

\begin{conjecture}\label{conj:res}
Let $X$ be a normal integral scheme quasi-projective over an excellent ring, which is of finite Krull dimension, admits a dualizing complex with no residue characteristic $2$, $3$ or $5$.  The log resolution conjecture holds for $X$ if every log pair
$(Y,\Delta_Y)$ whose underlying scheme $Y$ is birational to $X$ admits a log
resolution obtained
by a sequence of regular blowups along the non-simple normal crossing locus at each stage.
\end{conjecture}

\subsubsection{The cone theorem in dimension three}

We use the following form of the cone theorem in dimension at
most three.

\begin{theorem}
\label{thm:threefold-cone}
Let $(X,\Delta)$ be a $\mathbb{Q}$-factorial dlt pair of dimension at
most three, projective over $U$, with $\Delta$ a $\mathbb Q$-boundary.  Then
there are countably many $(K_X+\Delta)$-negative extremal rays $R_j$ such that
\begin{enumerate}
\item
\[
 \overline{NE}(X/U)=
 \overline{NE}(X/U)_{K_X+\Delta\geq0}+\sum_jR_j.
\]
\item The rays $R_j$ do not accumulate in
$\overline{NE}(X/U)_{K_X+\Delta<0}$.
\item Each $R_j$ is generated by an integral geometric rational image $C_j$
satisfying
\[
 -6d_{C_j}\leq (K_X+\Delta)\cdot C_j<0.
\]
\end{enumerate}
Moreover, for every $U$-ample $\mathbb{Q}$-divisor $A$, only finitely many of the
rays $R_j$ are $(K_X+\Delta+A)$-negative.
\end{theorem}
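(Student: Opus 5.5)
This statement is a compilation of known results, so the plan is to assemble it from the literature rather than reprove it, checking that the hypotheses and the normalization by the curve index $d_C$ match.

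\emph{Dimension at most two.} For $\dim X\leq 2$ the cone theorem for dlt (indeed arbitrary $\mathbb{Q}$-factorial) surface pairs over excellent bases is classical; see Tanaka's work and its relative versions, as collected in \cite{BMPSTWW1}. There the length bound is $-4d_C$ or better, which implies the stated $-6d_C$. For curves the statement is trivial.

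\emph{Dimension three.} I would cite the mixed-characteristic cone theorem for $\mathbb{Q}$-factorial dlt threefold pairs in \cite{BMPSTWW1} (and \cite{takamatsu_yoshikawa}). Over a field of characteristic $p>5$, possibly imperfect, the relevant references are \cite{das_waldron_imperfect,fever_dream}. These give (1), (2) and the finiteness of $(K_X+\Delta+A)$-negative rays. The last of these follows formally from (3) together with \autoref{lem:normalized-classes-discrete}: if $R_j$ is $(K_X+\Delta+A)$-negative, then $A\cdot\langle C_j\rangle<-(K_X+\Delta)\cdot\langle C_j\rangle\leq 6$, so only finitely many normalized classes are possible.

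For (3), the proofs there produce a rational curve by bend-and-break \cite{kollar_rational_1996} on a base change $X_{\overline u}$, or on a normalization of a component of a fibre. This gives a morphism $\mathbb P^1_{\overline{\kappa(u)}}\to (X_{\overline u})_{\mathrm{red}}$ whose image $\overline C$ satisfies $-(K+\Delta)\cdot\overline C\leq 2\dim=6$ in the geometric intersection pairing. Pushing forward to $C\subset X$ and using \eqref{eq:curve-index}, the bound $-6$ on $\overline C$ becomes $-6d_C$ on $C$. Here one must check that the pullback of $K_X+\Delta$ is what enters bend-and-break, which is part of the cited arguments.

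\emph{Main obstacle.} The real issue is bookkeeping. Some references state the length bound in terms of $\kappa(u)$-degrees with a different normalization, such as $2\dim X\cdot d$ with $d$ defined via $H^0(C,\mathcal O_C)$ rather than our $d_C$. I would reconcile the two using \autoref{lem:curve-index-pushforward} and \cite[Lemma~2.47]{BMPSTWW1}, verifying that the extremal curves constructed there are geometric rational images in the sense of \autoref{def:geometric-rational-image}.
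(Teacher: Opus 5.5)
\textbf{There are two genuine holes.} The overall shape matches the paper, since the statement is proved there purely by citation too.

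First, your case division omits threefolds lying over a closed point $u\in U$ with $\operatorname{char}\kappa(u)=0$. The standing hypotheses exclude only residue characteristics $2$, $3$, $5$, so such points occur. Examples are any $R$ that is a $\mathbb{Q}$-algebra, or the closed point $V(px-1)$ of $\mathbb{A}^1_{\mathbb{Z}_{(p)}}$, whose residue field is $\mathbb{Q}$. In that case $X$ is a dlt threefold over a possibly non-closed field of characteristic zero, and none of \cite{BMPSTWW1}, \cite{takamatsu_yoshikawa}, \cite{das_waldron_imperfect}, \cite{fever_dream} applies. The paper covers it with the classical cone theorem \cite[Theorem~3.7]{kollar_birational_1998}, applied after base change to $\overline{\kappa(u)}$ and descent.

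Relatedly, the dichotomy that matches the hypotheses of the cited theorems is not ``mixed characteristic versus over a field of characteristic $p$''. It is whether the image of $X$ in $U$ is positive-dimensional, where \cite[Theorem~9.28]{BMPSTWW1} applies, or a closed point of characteristic $p>5$, where \cite[Theorem~2.9]{fever_dream} gives the full statement.

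Second, your justification of the geometric rational image part of (3) is unsupported in the positive-dimensional-image case, and likewise for surfaces via \cite[Theorem~2.46]{BMPSTWW1}. You assume the cited proofs already produce a map $\mathbb{P}^1_{\overline{\kappa(u)}}\to(X_{\overline u})_{\mathrm{red}}$ by bend-and-break on the geometric fibre, with the pullback of $K_X+\Delta$ being ``part of the cited arguments''. But \cite[Theorem~9.28]{BMPSTWW1} and \cite[Theorem~2.46]{BMPSTWW1} supply the decomposition, non-accumulation, local finiteness and the bound $-6d_C$, and not this refinement. The paper fills the gap by observing that the argument in the proof of \cite[Theorem~2.9]{fever_dream} runs verbatim in the setting of \cite{BMPSTWW1}. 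So what you treat as normalization bookkeeping is really the point where a specific extra argument has to be imported. As written, rationality of the $C_j$ is not established in that case.

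The normalization reconciliation you worry about is not needed, because the cited statements are already phrased with the curve index $d_C$ via \cite[Lemma~2.47]{BMPSTWW1}. Your derivation of the local finiteness statement from (3) and \autoref{lem:normalized-classes-discrete} is correct.
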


\begin{proof}
For $\dim X\leq2$, the assertions other than the
geometric rational image refinement follow from
\cite[Theorem~2.46]{BMPSTWW1}.   If the image of $X$ in $U$ has positive
dimension, all but the geometric rational image statement are \cite[Theorem~9.28]{BMPSTWW1}.  The proof for the latter from \cite[Proof of Theorem~2.9]{fever_dream} applies verbatim in the setting of \cite{BMPSTWW1}.
If the image is a closed point of
positive characteristic, the full assertion is
\cite[Theorem~2.9]{fever_dream}.  For a characteristic-zero closed point, it
is the classical cone theorem
\cite[Theorem~3.7]{kollar_birational_1998}, applied after base
change to an algebraic closure and descent.
\end{proof}

\subsubsection{Relative fourfold MMP and dlt modifications.}

The following is the version of the four dimensional MMP 
 needed below.

\begin{theorem}[{\cite[Theorem~4.1]{hacon_four}}]
\label{thm:relative-fourfold-mmp}
Let $Z$ be a normal integral $\mathbb Q$-factorial scheme projective over
$U$, and let
\[
 \pi\colon(X,\Delta)\longrightarrow Z
\]
be a projective birational morphism from a $\mathbb Q$-factorial dlt pair of
dimension four, with $\Delta$ a $\mathbb Q$-boundary.  If
\(
 \operatorname{Ex}(\pi)\subseteq\lfloor\Delta\rfloor
\)
and the log-resolution hypothesis of \autoref{conj:res} holds, then one can run
a $(K_X+\Delta)$-MMP over $Z$ which terminates with a minimal model.
\end{theorem}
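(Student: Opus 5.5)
The plan is to deduce this directly from \cite[Theorem~4.1]{hacon_four}. I would first check that our standing hypotheses match theirs: excellent base of finite dimension with a dualizing complex, residue characteristics avoiding $2,3,5$, and the same form of \autoref{conj:res}. I would also recall the structure of their argument, so that it is clear which inputs are used.

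\textbf{Step 1: every negative ray meets $\lfloor\Delta\rfloor$ negatively.} Since $Z$ is $\mathbb Q$-factorial and $\pi$ is birational, write
\[
 K_X+\Delta\equiv_Z \sum_i a_iE_i,
\]
where the $E_i$ are the $\pi$-exceptional divisors. If $K_X+\Delta$ is not nef over $Z$, take a $(K_X+\Delta)$-negative extremal ray $R$ of $\overline{NE}(X/Z)$. Then $R\cdot E_i<0$ for some $E_i$, and by hypothesis every $E_i$ is a component $S$ of $\lfloor\Delta\rfloor$. So every step of the MMP is "pl": the ray satisfies $S\cdot R<0$ for some $S\subseteq\lfloor\Delta\rfloor$.

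\textbf{Step 2: cone theorem and contraction.} Restrict to the normalization $S^\nu$ and use adjunction $(K_X+\Delta)|_{S^\nu}=K_{S^\nu}+\Delta_{S^\nu}$, where $(S^\nu,\Delta_{S^\nu})$ is a dlt threefold pair. Applying \autoref{thm:threefold-cone} on $S^\nu$ gives the cone theorem for rays of this type, since these rays are spanned by curves in $S$. The contraction is then obtained by a basepoint-free argument on $S$, lifted to $X$. This uses the threefold MMP/abundance results cited in the introduction, and resolutions are used to produce dlt models.

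\textbf{Step 3: flips and termination.} Flips of these pl contractions exist by the four-dimensional pl-flip construction of \cite{hacon_four}. That construction reduces, via the resolution conjecture, to finite generation of a restricted algebra on $S$, which in turn follows from the threefold MMP. The property $\operatorname{Ex}(\pi)\subseteq\lfloor\Delta\rfloor$ is preserved by each step. Termination follows from special termination, because every flipping locus lies in $\lfloor\Delta\rfloor$, together with termination of the MMP with scaling in dimension three on the components of $\lfloor\Delta\rfloor$. Once the exceptional divisors have all been contracted or made $(K_X+\Delta)$-nef, the output is a minimal model over $Z$.

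\textbf{Main obstacle.} I expect the hardest part to be Step 3, namely the existence of pl flips and special termination in this mixed and positive characteristic generality. I would not reprove this but cite it from \cite{hacon_four}. The remaining work is to confirm that their hypotheses are exactly those assumed here.
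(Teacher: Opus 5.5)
\textbf{The gap is in your first step.} Your plan depends on checking that the hypotheses of \cite[Theorem~4.1]{hacon_four} coincide with the standing hypotheses here, but they do not. That theorem is stated only over perfect fields. Here $U$ is quasi-projective over an arbitrary excellent ring of finite dimension with a dualizing complex, so mixed characteristic and imperfect residue fields both occur. Consequently the components $S$ of $\lfloor\Delta\rfloor$ that you restrict to are three-dimensional excellent schemes of this general kind. Your check that ``their hypotheses are exactly those assumed here'' would therefore fail, and a bare citation does not prove the statement.

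The missing idea is an extension argument. The proof of \cite{hacon_four} goes through without change once the full basepoint free theorem for three-dimensional excellent schemes is available. That theorem is now known in the present setting, with \cite{fever_dream} supplying the case that was missing; compare the paragraph following Theorem~1.2 in \cite{hacon_four}. Your Step~2 does mention a basepoint-free argument on $S$. However, you treat it as already covered by the citation, when it is precisely the additional input that must be supplied to leave the perfect-field setting.

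\textbf{A smaller point in Step~1.} The deduction there does not work as written. From $K_X+\Delta\equiv_Z\sum a_iE_i$ and $(K_X+\Delta)\cdot R<0$ you only get $a_i(E_i\cdot R)<0$ for some $i$. This yields $E_i\cdot R<0$ only when $a_i>0$.

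The conclusion is still true, by a different argument:
\begin{itemize}
\item Take a $\pi$-ample divisor $A$ and set $E:=\pi^*\pi_*A-A$, which makes sense because $Z$ is $\mathbb Q$-factorial.
\item $E$ is $\pi$-exceptional, and $-E\equiv_Z A$ is $\pi$-ample.
\item By the negativity lemma, $E\geq 0$.
\item Hence $E\cdot R<0$ for every nonzero ray $R$ of $\overline{NE}(X/Z)$.
\item So some $\pi$-exceptional prime divisor is negative on $R$, and it is a component of $\lfloor\Delta\rfloor$ because $\operatorname{Ex}(\pi)\subseteq\lfloor\Delta\rfloor$.
\end{itemize}
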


\begin{proof}
While these results are stated only in the case of perfect fields in
\cite{hacon_four}, the proof goes through without change now that we have
access to the full basepoint free theorem for three dimensional excellent schemes, with 
\cite{fever_dream} supplying the missing case.  See
\cite[p.~2, paragraph following Theorem~1.2]{hacon_four}.
\end{proof}

  We will mainly need this for the following standard
consequence.

\begin{proposition}
\label{prop:dlt-model}
Let $(Z,\Delta)$ be a normal integral $\mathbb{Q}$-factorial pair of 
dimension four, projective over $U$, where $\Delta$ is a $\mathbb{Q}$-boundary,
and suppose that \autoref{conj:res} holds for $Z$.
Then there is a projective birational morphism $f:X\to Z$ such that, on
setting
\[
 \Delta_X:=f_*^{-1}\Delta+\operatorname{Ex}(f)_{\mathrm{red}},
\]
the pair $(X,\Delta_X)$ is $\mathbb{Q}$-factorial dlt,
$K_X+\Delta_X$ is nef over $Z$, and every $f$-exceptional prime divisor is a
component of $\lfloor\Delta_X\rfloor$.  Moreover, there is an effective
$f$-exceptional $\mathbb{Q}$-divisor $F$ such that
\[
 f^*(K_Z+\Delta)=K_X+\Delta_X+F
\]
and
\[
 f(\operatorname{Supp}F)\subseteq\operatorname{Nlc}(Z,\Delta),
\]
where $\operatorname{Nlc}(Z,\Delta)$ denotes the non-log-canonical locus.  In
particular, if $(Z,\Delta)$ is log canonical away from a closed subset $T$, then
$F$ is supported over $T$.  If $(Z,\Delta)$ is log canonical, then $F=0$.
\end{proposition}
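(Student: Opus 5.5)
We construct $X$ as a log resolution followed by a relative MMP, in the standard way. By \autoref{conj:res}, take a log resolution $g\colon W\to Z$ of $(Z,\Delta)$, obtained by regular blowups along the non-snc locus. Since $Z$ is $\mathbb{Q}$-factorial, we may arrange that $\operatorname{Ex}(g)$ is of pure codimension one. Concretely, the exceptional locus of a birational morphism to a $\mathbb{Q}$-factorial variety is divisorial: pull back the Cartier multiple of the divisor of a general function vanishing on the centre.

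Set
\[
 \Delta_W:=g_*^{-1}\Delta+\operatorname{Ex}(g)_{\mathrm{red}},
\]
where coefficients of $g_*^{-1}\Delta$ above $1$ are truncated to $1$ so that $\Delta_W$ is a boundary. Then $(W,\Delta_W)$ is snc, hence $\mathbb{Q}$-factorial dlt, and every $g$-exceptional divisor lies in $\lfloor\Delta_W\rfloor$. So \autoref{thm:relative-fourfold-mmp} applies and we run a $(K_W+\Delta_W)$-MMP over $Z$, terminating with $f\colon X\to Z$. Each step is a divisorial contraction or flip over $Z$, so $\mathbb{Q}$-factoriality and dlt-ness are preserved. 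The divisors contracted are $f$-exceptional, hence components of $\lfloor\Delta\rfloor$, so the surviving exceptional divisors remain in $\lfloor\Delta_X\rfloor$, and $K_X+\Delta_X$ is nef over $Z$. A small wrinkle: to match the statement's formula $\Delta_X=f_*^{-1}\Delta+\operatorname{Ex}(f)_{\mathrm{red}}$, I would first treat the case where $\Delta$ has coefficients at most $1$. The paper assumes $\Delta$ is a $\mathbb{Q}$-boundary, so no truncation is actually needed.

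Next we identify $F$. Write
\[
 f^*(K_Z+\Delta)=K_X+\Delta_X+F,
\]
so that $F=\sum_E(1-a(E,Z,\Delta))E$ over the exceptional $E$, which is $f$-exceptional (no $f_*^{-1}\Delta$ terms survive). Then $-F\equiv_Z K_X+\Delta_X$ is $f$-nef. The negativity lemma for the birational morphism $f$ gives $F\geq0$; here $Z$ is only $\mathbb{Q}$-factorial, but negativity holds for excellent schemes in this setting, as used in \cite{BMPSTWW1}.

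The main step is the support statement: we must show that every component $E$ of $F$ has centre in $\operatorname{Nlc}(Z,\Delta)$. I would use the refined negativity lemma, whose conclusion is that over each point $z\in Z$, the fibre $f^{-1}(z)$ is either contained in $\operatorname{Supp}F$ or disjoint from it. Suppose $z=f(E)$ with $E\subseteq\operatorname{Supp}F$ but $z\notin\operatorname{Nlc}$. Localize at $z$: over a neighbourhood $V$ of $z$ the pair is lc, so every exceptional divisor over $V$ has discrepancy $a\geq-1$ and coefficient $1-a-1=-a\leq1$. We must show these coefficients are actually zero. For this, note that over $V$ the divisor $F$ satisfies
\[
 K_X+\Delta_X+F\sim_{\mathbb{Q},V}0, \qquad K_X+\Delta_X \ \text{nef over } V.
\]
Then $F$ is exceptional and $-F$ is nef, which forces $F\geq 0$ but does not by itself give $F=0$.

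To get $F=0$, compare discrepancies instead. On $V$, let $p\colon W'\to X$ be a common resolution. Then
\[
 p^*(K_X+\Delta_X)+p^*F=(f\circ p)^*(K_Z+\Delta)=K_{W'}+\Delta_{W'}
\]
with all coefficients of $\Delta_{W'}$ at most $1$, since $(Z,\Delta)$ is lc over $V$. Take a component $E$ of $F$, which has coefficient $1$ in $\Delta_X$. Its coefficient on the right is $1-a(E,Z,\Delta)\leq1$, while on the left it is $1+\operatorname{mult}_E F$. Hence $\operatorname{mult}_E F\leq0$, so $F=0$ over $V$.

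It follows that $\operatorname{Supp}F$ maps into $\operatorname{Nlc}(Z,\Delta)$, and the consequences about a closed subset $T$ and the lc case are then immediate. The main obstacle I expect is justifying that the log resolution's exceptional locus is divisorial and that negativity applies in this excellent-scheme generality. I would try to cite both from \cite{BMPSTWW1}.
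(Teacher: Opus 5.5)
Your proposal is correct and is essentially the paper's proof: the paper runs the construction of \cite[Corollary~4.8]{hacon_four}, a log resolution followed by a $(K_W+\Delta_W)$-MMP over $Z$ via \autoref{thm:relative-fourfold-mmp}, and says the remaining conclusions follow immediately; you supply exactly those details, namely negativity for $F\geq0$ and a comparison of the coefficient of each component $E$ of $F$ when the generic point of $f(E)$ lies outside $\operatorname{Nlc}(Z,\Delta)$. One notational slip should be fixed: with $a(E,Z,\Delta)$ the usual discrepancy, $\operatorname{mult}_EF=-1-a(E,Z,\Delta)$, since your $1-a$ is the log-discrepancy form of the crepant-boundary coefficient; also, neither the refined negativity lemma nor the auxiliary resolution $W'$ is needed, because $E$ is already a divisor on $X$.
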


\begin{proof}
Run the construction of
\cite[Corollary 4.8]{hacon_four}, using
\autoref{thm:relative-fourfold-mmp} and the log resolution assumption in place of standard coefficients.
the construction does not assume that $(Z,\Delta)$ is log canonical, and the additional conclusions follow immediately in the case that it is not.  
\end{proof}

\subsubsection{ACC for LCTs}

We also use the ascending chain condition (ACC) for log canonical thresholds (LCTs) at codimension three points of our fourfold, by applying the following:

\begin{theorem}[{\cite[Theorem 1.5]{fever_dream}}]
Let $\Lambda\subseteq[0,1]$ and $\Omega\subseteq\mathbb{R}_{\geq0}$ be DCC
sets.  Let $\mathfrak L_3(\Lambda)$ be the class of excellent log canonical
pairs $(X,\Delta)$ such that
\begin{enumerate}
\item $\dim X=3$;
\item the coefficients of $\Delta$ belong to $\Lambda$; and
\item no residue characteristic of $X$ is equal to $2$, $3$, or $5$.
\end{enumerate}
For $(X,\Delta)\in\mathfrak L_3(\Lambda)$, let
$\mathfrak G_\Omega(X)$ be the set of effective $\mathbb{Q}$-Cartier divisors
on $X$ whose coefficients belong to $\Omega$.  For
$G\in\mathfrak G_\Omega(X)$, set
\[
 \lct(X,\Delta;G)
 :=\sup\{t\in\mathbb{R}_{\geq0}\mid
             (X,\Delta+tG)\text{ is log canonical}\}.
\]
Then
\[
 \operatorname{LCT}_3(\Lambda,\Omega):=
 \left\{
  \lct(X,\Delta;G)
  \ \middle|\
  (X,\Delta)\in\mathfrak L_3(\Lambda),\ 
  G\in\mathfrak G_\Omega(X)
 \right\}
\]
satisfies the ascending chain condition; equivalently, it contains no infinite
strictly increasing sequence.
\end{theorem}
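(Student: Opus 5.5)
We would follow the Hacon--McKernan--Xu strategy, which reduces ACC for log canonical thresholds in dimension three to global ACC for numerically trivial log pairs in dimension at most two by adjunction to an lc place. Suppose, for a contradiction, that $t_i=\lct(X_i,\Delta_i;G_i)$ is a strictly increasing sequence with $(X_i,\Delta_i)\in\mathfrak L_3(\Lambda)$ and $G_i\in\mathfrak G_\Omega(X_i)$.

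\textbf{Normalisation.} Since $\Omega$ is DCC it has a positive minimum among its nonzero elements. Discarding the trivial case $G_i=0$, this gives a uniform upper bound for the $t_i$. Consequently the coefficients of $B_i:=\Delta_i+t_iG_i$ lie in the DCC set $\Lambda+\{t_i\}\Omega$, intersected with $[0,1]$. Each pair $(X_i,B_i)$ is log canonical but not klt. Moreover, we may choose an lc centre $V_i$ that ceases to be lc after any increase of $t$. We localise at the generic point of $V_i$, replacing $X_i$ by the spectrum of its local ring. This is harmless because excellence and the residue-characteristic hypothesis are preserved.

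\textbf{Dlt modification and adjunction.} Using the three-dimensional MMP for excellent schemes with residue characteristics $\neq2,3,5$ (as in \cite{BMPSTWW1} and \cite{fever_dream}), we take a $\mathbb Q$-factorial dlt modification $g_i:Y_i\to X_i$ of $(X_i,B_i)$. We arrange that some component $S_i$ of $\lfloor B_{Y_i}\rfloor$ maps onto $V_i$ and meets the strict transform of $G_i$. We then run a further $(K_{Y_i}+B_{Y_i}-\epsilon S_i)$-MMP over $X_i$, or pass to a plt blow-up, so that $S_i$ is normal. In residue characteristic $p>5$, normality of plt centres in dimension three is exactly where the hypothesis on the characteristic enters.

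Adjunction then gives
\[
K_{S_i}+\operatorname{Diff}_{S_i}(B_{Y_i}-S_i)\equiv 0 \quad\text{over } V_i .
\]
The coefficients of the different lie in the derived DCC set $D(\Lambda+\{t_i\}\Omega)$. At least one coefficient genuinely involves $t_i$ with a nonzero contribution from $G_i$, because the strict transform of $G_i$ meets $S_i$.

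\textbf{Reduction to global ACC.} Since $\dim V_i\le 2$, the pair $(S_i,\operatorname{Diff})$ is of one of three kinds: a projective surface over the residue field of a closed point, a curve over the function field of a curve, or a surface dominating a surface. In the last case we restrict to the generic fibre, which reduces to dimension zero, and the problem is trivial. For curves, global ACC is immediate: $\deg K_C=-2d_C$ together with $\sum b_jd_{P_j}=2d_C$ gives a degree-count argument with DCC coefficients. For surfaces over arbitrary, possibly imperfect, fields we need global ACC for numerically trivial lc surface pairs with DCC coefficients. We would prove it by running a $K_{S}$-MMP to a Mori fibre space, which is available for excellent surfaces. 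If the base is a curve, we reduce to the curve case by restricting to the generic fibre. If the base is a point, the surface is a rank-one log del Pezzo. There we use Alexeev-type boundedness of $\epsilon$-lc del Pezzos over arbitrary fields; for lc but not $\epsilon$-lc pairs we first reduce to curves via adjunction on a further lc centre. Global ACC then contradicts the strict increase of $t_i$, since the relevant coefficient $b+t_ig$ is strictly increasing in $i$.

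The main obstacle is the global ACC for surfaces over non-closed and imperfect fields. Geometric normality fails there, and the curve indices $d_C$ enter all intersection numbers, so boundedness must be stated for twisted forms. We would first try to base change to $\overline{\kappa}$ after normalising. The different acquires extra terms from non-normality in a controlled, DCC way via \cite{das_waldron_imperfect}, so we would reduce to the algebraically closed case of Alexeev.
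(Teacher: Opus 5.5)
The paper does not prove this statement. It quotes it as \cite[Theorem 1.5]{fever_dream} and uses it as a black box, so there is no internal proof to compare yours with. Judged on its own, your outline has a sensible architecture: the Hacon--McKernan--Xu reduction, via a dlt modification and adjunction to an lc place over the closed point, of ACC for thresholds in dimension three to global ACC for numerically trivial log canonical pairs of dimension at most two. But the step that carries all the weight is not established, and the route you propose for it does not work.

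That step is global ACC for projective log canonical surface pairs $(S,\Theta)$ with $K_S+\Theta\equiv 0$ over the residue field $\kappa$ of the closed point. This $\kappa$ is typically imperfect: in this paper's application, a codimension-three point of a fourfold over an algebraically closed field has the function field of a curve as residue field. Base change to $\overline{\kappa}$ does not land you in Alexeev's setting.

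\begin{itemize}
\item If $T$ is the normalisation of $(S_{\overline{\kappa}})_{\mathrm{red}}$ and $\phi\colon T\to S$, then $K_T+C=\phi^*K_S$ with $C\geq 0$ integral. For a purely inseparable extension of degree $p$, $C$ is $(p-1)$ times an integral divisor.
\item A geometrically non-reduced component of $\Theta$ pulls back with multiplicity a power of $p$.
\item So $(T,\phi^*\Theta+C)$ is in general not log canonical, and not even a boundary. That its coefficients lie in a DCC set is beside the point, since Alexeev's boundedness and global ACC concern log canonical pairs.
\item Singularities can get worse even when $S$ is geometrically normal. The surface $z^2=x^3+y^p-t$ over $\mathbb{F}_p(t)$ is regular, but over the algebraic closure it acquires $z^2=x^3+u^p$, which is not log canonical for $p\geq 7$.
\end{itemize}

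Hence ``Alexeev-type boundedness of $\epsilon$-lc del Pezzo surfaces over arbitrary fields'' is not a formal consequence of the algebraically closed case. You would need either boundedness and global ACC proved directly over non-closed fields, or a geometric log canonicity result for the surfaces that actually occur. Neither is supplied.

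There are also smaller problems in the reduction itself.
\begin{itemize}
\item A $K_S$-MMP can contract the component $P$ carrying the $t_i$-dependent coefficient, or leave it vertical over the base of the Mori fibre space; restricting to the generic fibre then loses it. Run a $(K_S+\Theta-\epsilon P)$-MMP instead, so that $P$ is never contracted and ends up relatively ample.
\item In the Picard-number-one case, the failure of $\epsilon$-lc must be handled uniformly along the sequence, including klt pairs whose minimal log discrepancies tend to $0$. Adjunction to a further lc centre only covers the non-klt case.
\item A $(K_{Y_i}+B_{Y_i}-\epsilon S_i)$-MMP over $X_i$ is a $(-S_i)$-MMP, and it eventually contracts $S_i$ itself. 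You want to subtract the other components of $\lfloor B_{Y_i}\rfloor$.
\item Choosing $S_i$ over $V_i$ so that it meets the strict transform of $G_i$ needs an argument, e.g.\ connectedness of the fibre over $V_i$, or extracting a single lc place with relative Picard number one.
\end{itemize}
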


In particular, we apply ACC for LCTs to the following numbers:

\begin{definition}
Let $X$ be a normal fourfold, let $\Delta$ be a $\mathbb{Q}$-boundary such that
$(X,\Delta)$ is log canonical in codimension three, and let $S$ be a prime
$\mathbb{Q}$-Cartier divisor.  Define the codimension three log canonical
threshold as
\[
 \begin{split}
 c(X,\Delta;S):=\sup\{t\in[0,1]\mid {}&(X,\Delta+tS)\text{ is log canonical}\\
 &\text{in codimension three}\}.
\end{split}
\]
Here ``log canonical in codimension three'' means log canonical at every point
of codimension at most three.
\end{definition}

Under the assumption that a fixed log resolution of $(X,\Delta+S)$ exists, this number is the minimum of $1$ and
the finitely many rational bounds supplied by divisors whose centers on $X$
have codimension at most three; in particular, it is rational, and $(X,\Delta+cS)$ is log canonical at every point of codimension at most three and has at least one non-klt center of codimension at most three.
Its non-log-canonical locus has codimension at least four and hence, since
$X$ has dimension four, is a finite set of closed points. 
If $(X,\Delta)$ is klt, or more generally dlt with $S$ not contained in $\rddown{\Delta}$, then the same finite collection of inequalities also shows
that $c(X,\Delta;S)>0$.

\section{Finding curves in extremal rays}

Our proof of the cone theorem proceeds by finding a curve satisfying the required bound on normalized length in any $K_X+\Delta$-negative extremal ray.  We do this by pushing them down from higher models using the following lemma.

\begin{lemma}\label{lem:extremal-lift}
Let $f:Y\to X$ be a projective birational morphism of normal integral schemes
projective over $U$.  If $R$ is an extremal ray of $\overline{NE}(X/U)$, there is an
extremal ray $R_Y$ of $\overline{NE}(Y/U)$ such that $f_*R_Y=R$.
\end{lemma}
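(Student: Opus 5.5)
The plan is to study the preimage of $R$ under $f_*$ and to find an extremal ray inside it using the Krein--Milman type structure of closed convex cones. By \autoref{lem:cone-pushforward}, $f_*\colon N_1(Y/U)\to N_1(X/U)$ maps $\overline{NE}(Y/U)$ onto $\overline{NE}(X/U)$. So set
\[
 F:=\overline{NE}(Y/U)\cap f_*^{-1}(R).
\]
This is a closed convex subcone of $\overline{NE}(Y/U)$, and $f_*F=R$. In particular $F$ contains some class $\beta$ with $f_*\beta$ a nonzero generator of $R$.

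The first step is to show that $F$ is an extremal face of $\overline{NE}(Y/U)$. Suppose $\alpha_1+\alpha_2\in F$ with $\alpha_i\in\overline{NE}(Y/U)$. Then $f_*\alpha_1+f_*\alpha_2\in R$, and each $f_*\alpha_i$ lies in $\overline{NE}(X/U)$. Since $R$ is extremal, both $f_*\alpha_i$ lie in $R$, so both $\alpha_i$ lie in $F$.

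The second step uses the fact that $\overline{NE}(Y/U)$ is salient, since a $U$-ample divisor $H_Y$ on $Y$ is positive on it away from $0$. Hence $F$ is a closed salient convex cone in a finite-dimensional space. Its compact base $F\cap\{H_Y\cdot\alpha=1\}$ is the convex hull of its extreme points, by Minkowski/Krein--Milman. Therefore $F$ is the convex cone spanned by its extremal rays. Since an extremal ray of an extremal face is an extremal ray of the ambient cone, every extremal ray of $F$ is an extremal ray of $\overline{NE}(Y/U)$.

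The last step is to choose the ray. Write $\beta=\sum_k\gamma_k$ with each $\gamma_k$ spanning an extremal ray of $F$. Then $\sum_k f_*\gamma_k=f_*\beta\neq0$ with every $f_*\gamma_k\in R$, so some $f_*\gamma_k$ is nonzero. Take $R_Y=\mathbb R_{\geq0}\gamma_k$. Then $f_*R_Y$ is a nonzero ray contained in $R$, hence equal to $R$.

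The main subtlety is the surjectivity $f_*\overline{NE}(Y/U)=\overline{NE}(X/U)$. Pushing forward only the classes of curves does not a priori hit limit points of the closure. This is exactly what \autoref{lem:cone-pushforward} supplies: it is needed to guarantee that $F$ maps onto $R$ and is not merely contained in $f_*^{-1}(R)\cap\ker$-type classes. Everything else is elementary convex geometry.
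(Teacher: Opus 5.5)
Your proof is correct and follows essentially the same route as the paper. Both arguments form the face $\overline{NE}(Y/U)\cap f_*^{-1}(R)$, which surjects onto $R$ by \autoref{lem:cone-pushforward}, and then extract an extreme point with nonzero pushforward from its compact slice cut out by an ample divisor. The only difference is cosmetic: you find that extreme point in a Minkowski decomposition of a preimage of a generator of $R$, whereas the paper takes an extreme point of the exposed face on which $f^*A\cdot\alpha$ is maximized, for a $U$-ample $A$ on $X$.
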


\begin{proof}

By \autoref{lem:cone-pushforward},
\(
 f_*\overline{NE}(Y/U)=\overline{NE}(X/U).
\)
Consequently
\[
 \mathcal F:=\overline{NE}(Y/U)\cap f_*^{-1}(R)
\]
is a closed face of $\overline{NE}(Y/U)$, and $f_*\mathcal F=R$.  Let $H$ be
a $U$-ample divisor on $Y$.  Since $N_1(Y/U)$ is finite-dimensional and $H$
is strictly positive on $\overline{NE}(Y/U)\setminus\{0\}$, the slice
\[
 \mathcal B:=\{\alpha\in\mathcal F\mid H\cdot\alpha=1\}
\]
is a compact convex base of the cone $\mathcal F$.  Let $A$ be a $U$-ample divisor on $X$
and define
\(
 \varphi(\alpha):=f^*A\cdot\alpha.
\)
The function $\varphi$ is nonnegative on $\mathcal B$.  It is positive at some
point of $\mathcal B$, because $f_*\mathcal F=R\ne0$.  Therefore it has a positive maximum, which is attained by compactness.

The maximizing locus
\[
 \mathcal B_m:=
 \{\alpha\in\mathcal B\mid\varphi(\alpha)=\max_{\beta\in\mathcal B}\varphi(\beta)\}
\]
is a nonempty compact exposed face of $\mathcal B$, and hence it has an extreme
point $\alpha_0$.  Since $\mathcal B_m$ is a face, $\alpha_0$ is also an
extreme point of $\mathcal B$.  Extreme points of the base $\mathcal B$
correspond to extremal rays of $\mathcal F$. Thus
\(
 R_Y:=\mathbb R_{\geq0}\alpha_0
\)
is an extremal ray of $\mathcal F$.  Since $\mathcal F$ is a face of
$\overline{NE}(Y/U)$, the ray $R_Y$ is also extremal in
$\overline{NE}(Y/U)$.  Finally,
$\varphi(\alpha_0)=m>0$, so $f_*\alpha_0\ne0$.  As $f_*\alpha_0\in R$, this
gives $f_*R_Y=R$.
\end{proof}

The following two lemmas allow us to find the required extremal curves by reducing to the threefold case.

\begin{lemma}\label{lem:negative-divisor}
Let $Y$ be a projective scheme over $U$, let $T\geq0$
be a $\mathbb{Q}$-Cartier divisor, and let $R$ be an extremal ray of
$\overline{NE}(Y/U)$.  If $T\cdot R<0$, then $R$ is contained in the image of
\[
 \overline{NE}(T^\nu/U)\longrightarrow\overline{NE}(Y/U),
\]
where $T^\nu$ is the normalization of the support of $T$.  
\end{lemma}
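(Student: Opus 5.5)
The plan is to show that the extremal ray $R$ lies in the closed subcone spanned by curves contained in $\operatorname{Supp}T$, and then to use \autoref{lem:cone-pushforward} to identify that subcone with the image of $\overline{NE}(T^\nu/U)$.

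\textbf{Step 1: splitting the cone.} Write $T=\sum a_k T_k$ with $a_k>0$, and let $\nu\colon T^\nu=\bigsqcup_k T_k^\nu\to Y$ be the natural map. Every curve $C$ over $U$ either satisfies $C\not\subseteq\operatorname{Supp}T$, in which case $T\cdot C\geq 0$, or lies in some $T_k$. So $NE(Y/U)=V_1+V_2$, where:
\begin{itemize}
\item $V_1$ is the cone generated by curves not contained in $\operatorname{Supp}T$, so $V_1\subseteq\{T\geq0\}$;
\item $V_2=\sum_k \nu_{k*}NE(T_k^\nu/U)$.
\end{itemize}
Each $\nu_k\colon T_k^\nu\to T_k$ is finite and surjective, so every curve in $T_k$ is the image of a curve in $T_k^\nu$. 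By \autoref{lem:cone-pushforward}, applied to $T_k^\nu\to T_k$ followed by the inclusion, $\overline{V_2}$ equals the image of $\overline{NE}(T^\nu/U)$. This image is closed: the cone $\overline{NE}(T^\nu/U)$ has a compact base cut out by a $U$-ample divisor, and a pullback of an ample divisor is ample on $T^\nu$ since $\nu$ is finite.

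\textbf{Step 2: decomposing a generator.} Closures of sums of cones are the main subtlety, so I argue with a compact slice. Fix a $U$-ample $H$ on $Y$ and pick $0\neq z\in R$ with $H\cdot z=1$. Write $z=\lim (x_n+y_n)$ with $x_n\in V_1$ and $y_n\in V_2$. Since $H\cdot x_n$ and $H\cdot y_n$ are nonnegative with sum bounded, both sequences lie in a compact slice of $\overline{NE}(Y/U)$. After passing to subsequences, $x_n\to x\in\overline{V_1}\subseteq\{T\geq0\}$ and $y_n\to y\in\overline{V_2}$. Hence $z=x+y$ with $x,y\in\overline{NE}(Y/U)$.

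\textbf{Step 3: extremality.} Because $R$ is extremal and $z=x+y$, both $x$ and $y$ lie in $R$. If $x\neq0$, then $x$ is a positive multiple of $z$, so $T\cdot x<0$, contradicting $x\in\overline{V_1}\subseteq\{T\geq0\}$. Therefore $x=0$ and $z=y\in\overline{V_2}$. Since $R=\mathbb{R}_{\geq0}z$, this gives $R\subseteq\operatorname{im}\bigl(\overline{NE}(T^\nu/U)\to\overline{NE}(Y/U)\bigr)$.

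The only delicate point is the closedness and compactness in Step 2, which the $H$-slice argument handles. The other steps are formal once \autoref{lem:cone-pushforward} is available for the finite maps $T_k^\nu\to T_k$.
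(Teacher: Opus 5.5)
Your proof is correct and follows the paper's argument. You split the approximating cycles into the part supported in $\operatorname{Supp}T$ and the rest, use an $H$-degree bound to extract convergent subsequences, and let extremality together with $T\cdot C\geq0$ for curves $C\not\subseteq\operatorname{Supp}T$ force $z$ into the closure of the cone of curves lying in $\operatorname{Supp}T$. The only difference is cosmetic: the paper lifts the cycles $u_n$ to $T^\nu$ and reuses the degree bound, whereas you prove directly that $\nu_*\overline{NE}(T^\nu/U)$ is closed. For that step, finiteness of $\nu$ already suffices without invoking \autoref{lem:cone-pushforward}.
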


\begin{proof}
Choose a nonzero $z\in R$ and effective one-cycles $z_n\to z$.  Split $z_n$
as $u_n+v_n$, where all components of $u_n$ are contained in $\Supp(T)$ and no
component of $v_n$ is contained in $\Supp(T)$.  Their degrees with respect to a fixed
divisor which is ample over $U$ are bounded.  After passing to a subsequence, both parts
converge, say to $u$ and $v$.  Then $z=u+v$, so extremality gives
$u,v\in R$.  Since $T\cdot v\geq0$ and $T\cdot z<0$, the class $u$ is
nonzero and so spans $R$.  Lifting the cycles $u_n$ to $T^\nu$ and using the same degree bound
shows that $u$ is in the displayed image.
\end{proof}

\begin{lemma}
\label{lem:boundary-ray}
Let $(Y,\Delta)$ be a $\mathbb{Q}$-factorial dlt pair of dimension four,
projective over $U$, and let $R$ be a $(K_Y+\Delta)$-negative extremal ray of
$\overline{NE}(Y/U)$.  If
$T\subseteq\lfloor\Delta\rfloor$ is a component satisfying $T\cdot R<0$,
then $R$ is generated by an integral geometric rational image $C$ such that
\[
 -6d_C\leq (K_Y+\Delta)\cdot C<0.
\]
\end{lemma}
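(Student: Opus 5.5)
We plan to reduce to the threefold cone theorem on $T$ via adjunction. Since $(Y,\Delta)$ is $\mathbb{Q}$-factorial dlt and $T$ is a component of $\lfloor\Delta\rfloor$, $T$ is normal. Write $\Delta=T+\Delta'$ and define the different by
\[
 (K_Y+\Delta)|_T=K_T+\Delta_T .
\]
Then $(T,\Delta_T)$ is dlt of dimension three, with $\Delta_T$ a $\mathbb Q$-boundary. We also need $T$ to be $\mathbb{Q}$-factorial so that \autoref{thm:threefold-cone} applies. For this we would invoke the standard fact that plt or dlt centres of a $\mathbb{Q}$-factorial dlt pair are $\mathbb Q$-factorial, in the form available in \cite{BMPSTWW1} or \cite{hacon_four} for our excellent setting. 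If that form is unavailable, we would pass to a small $\mathbb Q$-factorialization of $(T,\Delta_T)$, which exists by the threefold MMP, and then push curves forward. This pushforward is compatible with the required bound by \autoref{lem:curve-index-pushforward}, since the morphism is small and so $m=1$.

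Next, by \autoref{lem:negative-divisor} applied to the prime divisor $T$ (so that $T^\nu=T$), the ray $R$ lies in $\iota_*\overline{NE}(T/U)$, where $\iota:T\to Y$ is the inclusion. Consider the face
\[
 \mathcal F:=\overline{NE}(T/U)\cap\iota_*^{-1}(R).
\]
Arguing as in the proof of \autoref{lem:extremal-lift}, take an extreme point of a compact slice of $\mathcal F$ that is not killed by $\iota_*$. This gives an extremal ray $R_T$ of $\overline{NE}(T/U)$ with $\iota_*R_T=R$. By the projection formula, $(K_T+\Delta_T)\cdot R_T=(K_Y+\Delta)\cdot R<0$ after rescaling, so $R_T$ is $(K_T+\Delta_T)$-negative.

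Now apply \autoref{thm:threefold-cone} to $(T,\Delta_T)$. The ray $R_T$ is $(K_T+\Delta_T)$-negative and extremal, and it lies in the $(K_T+\Delta_T)$-negative part of the cone decomposition. Hence $R_T$ must equal one of the rays $R_j$. To see this, write a generator as a sum of an element of $\overline{NE}(T/U)_{K_T+\Delta_T\ge0}$ and a convergent combination of the $R_j$. Extremality forces every summand to lie on $R_T$. The nonnegative part cannot span $R_T$ because $R_T$ is negative, and the non-accumulation statement (2) ensures that the sum is not merely a limit of the $R_j$. Therefore $R_T=R_j$ is generated by an integral geometric rational image $C\subset T$ with
\[
 -6d_C^T\le (K_T+\Delta_T)\cdot C<0 .
\]

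Finally we transfer the bound back to $Y$. The curve $C$, viewed in $Y$, is still a geometric rational image, since $(T_{\overline u})_{\mathrm{red}}\to(Y_{\overline u})_{\mathrm{red}}$ is a closed immersion. Its curve index is the same computed on $T$ or on $Y$, by \autoref{lem:curve-index-pushforward} with $m=1$, together with the defining formula applied to Cartier divisors restricted from $Y$. Adjunction gives $(K_Y+\Delta)\cdot C=(K_T+\Delta_T)\cdot C$, and $[C]$ generates $R$. The main obstacle is the index comparison: we only have $d_C^Y\le d_C^T$ a priori. This direction suffices as long as we use $d^T$ in the bound, but for the stated inequality we need $d^Y\ge d^T$ as well. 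We would get this by comparing degrees against a $U$-ample divisor on $Y$, which restricts to an ample divisor on $T$. Applying the defining formula on both $T$ and $Y$ to the same geometric component $\overline C$ shows that the two indices agree.
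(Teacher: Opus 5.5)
Your outline matches the paper's proof: use \autoref{lem:negative-divisor} and the maximizing-face argument to get $R_T$, then adjunction, then \autoref{thm:threefold-cone}, then \autoref{lem:curve-index-pushforward} to come back down. However, two of your inputs fail as written.

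First, you assert that $T$ is normal. Normality of components of $\lfloor\Delta\rfloor$ for dlt pairs normally comes from vanishing theorems, and it is not established for fourfolds in positive or mixed characteristic. This is why the paper works throughout with the normalization $j\colon T^\nu\to Y$. Then $j$ is only finite, so a curve $D\subset T^\nu$ can map onto $C=j(D)$ with degree $m>1$, and the indices need not coincide. Instead one uses $(K_Y+\Delta)\cdot C=\frac1m(K_{T^\nu}+\Delta_{T^\nu})\cdot D\geq-6d_D/m\geq-6d_C$. Relatedly, you read \autoref{lem:curve-index-pushforward} backwards. For $\iota\colon T\to Y$ it gives $d_C^T\leq d_C^Y$, which is exactly the direction the bound needs, so your final index comparison is unnecessary.

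Second, the ``standard fact'' that components of $\lfloor\Delta\rfloor$ of a $\mathbb{Q}$-factorial dlt pair are $\mathbb{Q}$-factorial is false in every characteristic. Take $T=\{xy=zw\}\subset Y=\mathbb{A}^4$. The pair $(Y,T)$ is plt with $Y$ smooth: blowing up the origin gives a log resolution with discrepancy $3-2=1$. Yet $T$ is a threefold ordinary double point, and the plane $\{x=z=0\}$ on it is not $\mathbb{Q}$-Cartier. So your fallback is not optional, and ``push curves forward'' hides the essential step. \autoref{thm:threefold-cone} on a $\mathbb{Q}$-factorial model $h\colon\widetilde T\to T^\nu$ only produces curves spanning extremal rays of $\overline{NE}(\widetilde T/U)$. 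You must first use \autoref{lem:extremal-lift} to choose an extremal ray $R_{\widetilde T}$ with $h_*R_{\widetilde T}=R_T$, and note by crepancy that it is $(K_{\widetilde T}+\Delta_{\widetilde T})$-negative. Then push the resulting curve $C_T$ down to $Y$ via \autoref{lem:curve-index-pushforward}. The paper does exactly this, using a crepant $\mathbb{Q}$-factorial dlt modification from \cite[Corollary 9.21]{BMPSTWW1}. With these two repairs your argument becomes the paper's proof.
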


\begin{proof}
Let $j:T^\nu\to Y$ be the natural morphism.  By
\autoref{lem:negative-divisor}, the ray $R$ is contained in the image of
$j_*\overline{NE}(T^\nu/U)$.  Hence
\[
 \mathcal F_T:=\overline{NE}(T^\nu/U)\cap j_*^{-1}(R)
\]
is a closed face of $\overline{NE}(T^\nu/U)$ satisfying $j_*\mathcal F_T=R$.
Repeating the compactness and maximizing argument in the proof of
\autoref{lem:extremal-lift} produces an extremal ray $R_T$ of
$\overline{NE}(T^\nu/U)$ such that $j_*R_T=R$.
Adjunction gives a dlt pair $(T^\nu,\Delta_{T^\nu})$ of dimension three such that
\[
 (K_Y+\Delta)|_{T^\nu}=K_{T^\nu}+\Delta_{T^\nu}.
\]
  and hence $R_T$ is negative for
 $K_{T^\nu}+\Delta_{T^\nu}$.  Take a
crepant $\mathbb{Q}$-factorial dlt modification $(\widetilde T,\Delta_{\widetilde T})$
by 
\cite[Corollary 9.21]{BMPSTWW1}.  By \autoref{lem:extremal-lift}, there is a $(K_{\widetilde T}+\Delta_{\widetilde T})$-negative extremal
ray $R_{\widetilde{T}}\subset\overline{NE}(\widetilde T/U)$ mapping onto $R_T$.  The relative
three-dimensional cone theorem \autoref{thm:threefold-cone} gives an integral
geometric rational image $C_T$ spanning $ R_{\widetilde{T}}$ with
\[
 -6d_{C_T}\leq
 (K_{\widetilde T}+\Delta_{\widetilde T})\cdot C_T<0.
\]
Let $g:\widetilde T\to Y$, let $C=g(C_T)_{\mathrm{red}}$, and write
$g_*C_T=mC$.  Crepancy, adjunction, and the projection formula give
\[
 (K_Y+\Delta)\cdot C
 =\frac{1}{m}(K_{\widetilde T}+\Delta_{\widetilde T})\cdot C_T
 \geq-6\frac{d_{C_T}}{m}\geq-6d_C,
\]
where the last inequality is \autoref{lem:curve-index-pushforward}.  $C$ is a geometric rational image by composing with $C_T\to C$.  Since it spans $R$,
its intersection with $K_Y+\Delta$ is negative.
\end{proof}

In the next proposition, we find a curve in a suitably negative extremal ray by modifying the variety to apply \autoref{lem:boundary-ray}.

\begin{proposition}\label{prop:positive-threshold}
Let $(X,\Delta)$ be a $\mathbb{Q}$-factorial klt pair of dimension four,
projective over $U$, with $\Delta$ a $\mathbb{Q}$-boundary, and suppose that
\autoref{conj:res} holds for $X$.
Let $S$ be a prime divisor and let $R$ be an
extremal ray of $\overline{NE}(X/U)$ such that
\[
 (K_X+\Delta)\cdot R<0,
 \qquad S\cdot R<0.
\]
Then $R$ is generated by an integral geometric rational image $C$ satisfying
\[
 -6d_C\leq(K_X+\Delta)\cdot C<0.
\]
\end{proposition}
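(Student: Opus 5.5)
We will argue by an iterative construction which terminates by ACC for log canonical thresholds in codimension three, following the outline in the introduction. Set $X_0:=X$, $\Delta_0:=\Delta$, $R_0:=R$, $S_0:=S$. Let $c_0:=c(X_0,\Delta_0;S_0)$. Since $(X,\Delta)$ is klt, $c_0>0$, and $c_0$ is rational. The pair $(X_0,\Delta_0+c_0S_0)$ is lc away from finitely many closed points and has a non-klt center of codimension at most three. Apply \autoref{prop:dlt-model} to $(X_0,\Delta_0+c_0S_0)$ to get $f_1:X_1\to X_0$ and
\[
 f_1^*(K_{X_0}+\Delta_0+c_0S_0)=K_{X_1}+\Delta_1'+F_1,
\]
with $(X_1,\Delta_1')$ $\mathbb Q$-factorial dlt and $F_1\geq 0$ supported over finitely many closed points. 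By \autoref{lem:extremal-lift} choose an extremal ray $R_1$ with $f_{1*}R_1=R_0$. Since $R_1$ is not contracted and $f_1(\operatorname{Supp}F_1)$ is finite, a general curve class in $R_1$ is not contained in $\operatorname{Supp}F_1$. More precisely, approximating cycles can be split as in \autoref{lem:negative-divisor}: if $F_1\cdot R_1<0$, then $R_1$ would be spanned by classes of curves in $\operatorname{Supp}F_1$, which all map to points, contradicting $f_{1*}R_1\neq0$. Hence $F_1\cdot R_1\geq0$. We also have $(K_{X_0}+\Delta_0+c_0S_0)\cdot R_0<0$, since both $K+\Delta$ and $S$ are negative on $R$. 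Therefore
\[
 (K_{X_1}+\Delta_1')\cdot R_1\le f_1^*(K_{X_0}+\Delta_0+c_0S_0)\cdot R_1<0.
\]

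Next we locate a negative boundary component. If some component $T$ of $\lfloor\Delta_1'\rfloor$ has $T\cdot R_1<0$, then \autoref{lem:boundary-ray} gives a geometric rational image $C_1$ spanning $R_1$ with $-6d_{C_1}\le(K_{X_1}+\Delta_1')\cdot C_1<0$. Let $C=f_1(C_1)_{\mathrm{red}}$ with $f_{1*}C_1=mC$. Using $F_1\cdot C_1\ge0$, $S\cdot C_1<0$ (by the projection formula) and $K+\Delta$ negative, we get
\[
 (K_X+\Delta)\cdot C\geq \tfrac1m(K_X+\Delta+c_0S)\cdot f_{1*}C_1\geq \tfrac1m(K_{X_1}+\Delta_1')\cdot C_1\ge -6d_{C_1}/m\ge -6d_C
\]
by \autoref{lem:curve-index-pushforward}. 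This $C$ spans $R$ and is a geometric rational image, so we are done.

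Otherwise, $T\cdot R_1\ge0$ for every component of $\lfloor\Delta_1'\rfloor$. We then pass to the klt pair $(X_1,\Delta_1:=\Delta_1'-\epsilon\lfloor\Delta_1'\rfloor)$, which is still negative on $R_1$ for small $\epsilon$, and look for a new divisor negative on $R_1$. Write $f_1^*S=S_1+E$ with $E\ge0$ exceptional. Since each exceptional divisor lies in $\lfloor\Delta_1'\rfloor$, we have $E\cdot R_1\ge0$, so $S_1\cdot R_1\le f_1^*S\cdot R_1<0$, where $S_1$ is the strict transform. The coefficient of $S_1$ in $\Delta_1'$ is $b_1:=\operatorname{mult}_S\Delta+c_0$. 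Iterating, the coefficient of $S_k$ keeps growing, and at the same time the codimension-three thresholds are attained. The cleanest bookkeeping is to keep a single boundary on $X_k$ and increase the coefficient of the strict transform of $S$, so that the thresholds $t_k$ (the total coefficient added to $S$) form an increasing sequence. Each $t_k$ is a codimension-three lct computed at a codimension $\le3$ point. Localizing at that point gives a pair of dimension $\le3$ whose coefficients lie in a DCC set generated by the coefficients of $\Delta$, the fixed value $1$, and finitely many rationals. By the ACC theorem from \cite{fever_dream}, the sequence cannot increase strictly forever. Termination means that at some stage $S_k$ itself (coefficient $1$) or an lc center divisor lies in $\lfloor\Delta_k'\rfloor$ and is negative on $R_k$, returning us to the good case. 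The comparison of $K+\Delta$ along the tower is the same chain of inequalities as above, composed over all the steps, and uses that every $F_j$ and every exceptional correction is nonnegative on the lifted rays.

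The main obstacle is this termination step. The difficulty is arranging the iteration so that the thresholds truly form a strictly increasing chain of codimension-three lcts with DCC coefficient data. In particular, one must show that failing to obtain a negative boundary component forces a strict increase: the new threshold cannot be $0$, because $S_k$ is not in $\lfloor\Delta_k\rfloor$ and the pair is klt after the $\epsilon$-perturbation. I would first try to control this by recording everything on a fixed log resolution of $(X,\Delta+S)$, using \autoref{conj:res}, so that only finitely many divisorial valuations ever matter.
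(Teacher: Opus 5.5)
Your outline works and shares the paper's skeleton: codimension-three thresholds, \autoref{prop:dlt-model}, lifted rays with $F\cdot R\geq0$ via \autoref{lem:negative-divisor}, stopping as soon as a component of the round-down is negative, three-dimensional ACC, and pushing the length bound down the tower. The iteration, however, is organized differently. You re-perturb to a klt pair at every stage, in effect re-applying your first step to $(X_k,\Delta_k'-\epsilon\lfloor\Delta_k'\rfloor,S_k,R_k)$. Each new increment is then positive simply because the pair is klt and $S_k$ is $\mathbb{Q}$-Cartier.

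The paper never perturbs inside the iteration. It keeps $\Delta_i=(\rho_i)^{-1}_*\Delta+\operatorname{Ex}(\rho_i)_{\mathrm{red}}$ and measures $a_i=c(X_{i-1},\Delta_{i-1};S_{i-1})$ against boundaries with coefficients in $\operatorname{coeff}(\Delta)\cup\{1\}$. It gets $a_i>a_{i-1}$ from dlt-ness: $S_{i-1}$ has coefficient $<1$ in the dlt boundary $\Delta_{i-1}+a_{i-1}S_{i-1}$, so it contains no lc centre, and the finitely many inequalities on a log resolution survive a small increase of its coefficient. Your route trades that observation for an extra parameter.

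To make your version feed into ACC, you should state two things explicitly. First, fix a single rational $\epsilon\in(0,1]$ for all stages. Smallness is never needed, since every component of $\lfloor\Delta_k'\rfloor$ is nonnegative on $R_k$. Second, apply ACC to the total thresholds $t_k=c(X_k,B_k;S_k)$, where $B_k$ is the perturbed boundary on $X_k$ minus the multiple of $S_k$ added so far. These are thresholds of klt pairs with coefficients in the finite set $\operatorname{coeff}(\Delta)\cup\{1-\epsilon\}$, and they strictly increase.

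Do not mix the two bookkeepings. If the thresholds are measured against the unperturbed boundary, with exceptional divisors at coefficient one, then klt-ness of the perturbed pair says nothing about strict increase, and you need the paper's no-lc-centre argument instead.

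Finally, a fixed log resolution of $(X,\Delta+S)$ is not needed. What is used is that each $t_k<1$ is attained by a divisor with centre of codimension at most three on a log resolution of the current model, which \autoref{conj:res} supplies. After localizing at a codimension-three point, ACC alone gives termination.
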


\begin{proof}
	We produce an inductive sequence of birational models of $X$, which will eventually allow us to find the required curve.
Put $(X_0,\Delta_0,S_0,a_0,R_0)=(X,\Delta,S,0,R)$.
For a fixed $i\geq1$, suppose that the construction has reached level $i-1$, with projective birational morphism
\[
 \rho_{i-1}:=\pi_1\circ\cdots\circ\pi_{i-1}:X_{i-1}\longrightarrow X,
 \qquad \rho_0:=\operatorname{id}_X.
\]
Here $X_{i-1}$ is normal, integral and $\mathbb{Q}$-factorial, and
\[
 \Delta_{i-1}=(\rho_{i-1})_*^{-1}\Delta
      +\operatorname{Ex}(\rho_{i-1})_{\mathrm{red}},
 \qquad
 S_{i-1}=(\rho_{i-1})_*^{-1}S.
\]
Thus $\Delta_{i-1}$ is a $\mathbb{Q}$-boundary and $S_{i-1}$ is a prime
divisor. The rational number $a_{i-1}$ for $i>1$ is defined at the previous level by $a_{i-1}=c(X_{i-2},\Delta_{i-2};S_{i-2})$.  The pair
$(X_{i-1},\Delta_{i-1}+a_{i-1}S_{i-1})$ is dlt, and
$R_{i-1}\subseteq\overline{NE}(X_{i-1}/U)$ is an extremal ray satisfying
\[
 (\rho_{i-1})_*R_{i-1}=R,
 \qquad
 (K_{X_{i-1}}+\Delta_{i-1}+a_{i-1}S_{i-1})\cdot R_{i-1}<0,
 \qquad
 S_{i-1}\cdot R_{i-1}<0.
\]
The construction would stop if $R_{i-1}$ were negative for any component of $\rddown{\Delta_{i-1}+a_{i-1}S_{i-1}}$.  If it has not stopped, we must have $\operatorname{coeff}_{S_{i-1}}(\Delta_{i-1})+a_{i-1}<1$.

Assuming the construction has not terminated at level $i-1$, define
\[
 a_i:=c(X_{i-1},\Delta_{i-1};S_{i-1}).
\]
We claim that $a_i>a_{i-1}$.  For $i=1$, positivity of the codimension-three
log canonical threshold for the klt pair $(X_0,\Delta_0)$ gives
$a_1>0=a_0$.  Suppose that $i>1$.  By induction, $a_{i-1}>0$.  Ignore the finitely many log canonical
strata of $(X_{i-1},\Delta_{i-1}+a_{i-1}S_{i-1})$ which are codimension four closed points, so that every remaining log canonical center is a coefficient-one
stratum of codimension at most three.  Since $\operatorname{coeff}_{S_{i-1}}(\Delta_{i-1})+a_{i-1}<1$, we see that
$S_{i-1}$ cannot contain any such stratum.  The finitely many inequalities on
a fixed log resolution therefore remain valid after increasing the coefficient
of $S_{i-1}$ by some $\delta>0$.  Hence
\[a_i\geq a_{i-1}+\delta>a_{i-1}.\]

By construction,
$(X_{i-1},\Delta_{i-1}+a_iS_{i-1})$ is log canonical away from finitely many
closed points.  Apply \autoref{prop:dlt-model} to this pair and write
\[
 \pi_i:X_i\longrightarrow X_{i-1},\qquad
 \Delta_i:=(\pi_i)_*^{-1}\Delta_{i-1}
       +\operatorname{Ex}(\pi_i)_{\mathrm{red}},
 \qquad
 S_i:=(\pi_i)_*^{-1}S_{i-1}.
\]
Then
\begin{equation}\label{eq:iterated-threshold}
 \pi_i^*(K_{X_{i-1}}+\Delta_{i-1}+a_iS_{i-1})
 =K_{X_i}+\Delta_i+a_iS_i+F_i,
\end{equation}
where $(X_i,\Delta_i+a_iS_i)$ is $\mathbb{Q}$-factorial dlt,
 every
$\pi_i$-exceptional divisor is a component of $\lfloor\Delta_i\rfloor$, and
$F_i\geq0$ is exceptional and supported over finitely many closed points of $X_{i-1}$.

By \autoref{lem:extremal-lift}, choose an extremal ray $R_i$ mapping onto
$R_{i-1}$.  Since $a_i>a_{i-1}$ and $S_{i-1}\cdot R_{i-1}<0$, increasing the
coefficient of $S_{i-1}$ makes the adjoint divisor more negative on
$R_{i-1}$.  Moreover, $F_i\cdot R_i\geq0$.
Indeed, if $F_i\cdot R_i<0$, then \autoref{lem:negative-divisor} implies that
    $R_i$ is contained in the image of
    $\overline{NE}(F_i^\nu/U)\to\overline{NE}(X_i/U)$.  Since $F_i$ is supported over
finitely many closed points, $\pi_{i*}$ vanishes on this image.  This would give
$\pi_{i*}R_i=0$, contradicting $\pi_{i*}R_i=R_{i-1}$.  It follows from
\eqref{eq:iterated-threshold} that
\[
 (K_{X_i}+\Delta_i+a_iS_i)\cdot R_i<0.
\]

Write
\[
 \pi_i^*S_{i-1}=S_i+G_i,
 \qquad G_i\geq0,
 \qquad \operatorname{Supp}(G_i)\subseteq\lfloor\Delta_i\rfloor.
\]
We know that $\pi_i^*S_{i-1}$ is negative on $R_i$.  If a component of $G_i$ is negative on
$R_i$, we stop.  If no such component exists, then $S_i\cdot R_i<0$.  If
$S_i$ is a component of $\lfloor\Delta_i+a_iS_i\rfloor$, we stop again.
Otherwise
\(
 \operatorname{coeff}_{S_i}(\Delta_i)+a_i<1,
\)
and the construction has reached level $i$ without terminating.

We can now iterate this construction, which only terminates if it finds a coefficient one component of the boundary on which $R$ is negative.  If it does not stop, it gives a strictly increasing sequence
\[
 0<a_1<a_2<a_3<\cdots<1.
\]
The coefficients of all the $\Delta_i$ belong to the fixed finite set consisting
of the coefficients of $\Delta_0$ together with $1$.  For $a_i<1$, attainment on
the fixed log resolution forces
$a_i=c(X_{i-1},\Delta_{i-1};S_{i-1})$ to be computed at a center of codimension at
most three.  Localizing at a codimension three point contained in such a center gives a
log canonical threshold of an excellent singularity of dimension 
three.  The ACC for three-dimensional log canonical thresholds
\cite[Theorem 1.5]{fever_dream} rules out this sequence.

Thus, after finitely many steps, there is a dlt pair
$(X_N,\Delta_N+a_NS_N)$, an extremal ray $R_N$, and a component
$T\subseteq\lfloor\Delta_N+a_NS_N\rfloor$ such that both
\[
 (K_{X_N}+\Delta_N+a_NS_N)\cdot R_N<0,
 \qquad T\cdot R_N<0.
\]
By \autoref{lem:boundary-ray}, $R_N$ is generated by an integral geometric
rational image $C_N$
with
\[
 -6d_{C_N}\leq(K_{X_N}+\Delta_N+a_NS_N)\cdot C_N<0.
\]

Let $C_i$ be the reduced image of $C_N$ on $X_i$, and let $m_i$ be the degree
of $C_i\to C_{i-1}$.  None of these curves are contracted, because they each span their extremal ray, whose image is not zero.  Since $F_i\cdot C_i\geq0$, $S_{i-1}\cdot C_{i-1}<0$, and
$a_i\geq a_{i-1}$, the projection formula gives
\begin{align*}
 &(K_{X_{i-1}}+\Delta_{i-1}+a_{i-1}S_{i-1})\cdot C_{i-1}\\
 &\quad\geq
 (K_{X_{i-1}}+\Delta_{i-1}+a_iS_{i-1})\cdot C_{i-1}\\
 &\quad=\frac{1}{m_i}
 (K_{X_i}+\Delta_i+a_iS_i+F_i)\cdot C_i\\
 &\quad\geq\frac{1}{m_i}
 (K_{X_i}+\Delta_i+a_iS_i)\cdot C_i.
\end{align*}
By induction we may assume that 
\begin{align*}
(K_{X_i}+\Delta_i+a_iS_i)\cdot C_i&\geq -6d_{C_i} 
\end{align*}
Hence the above calculation together with \autoref{lem:curve-index-pushforward} gives
\[
	(K_{X_{i-1}}+\Delta_{i-1}+a_{i-1}S_{i-1})\cdot C_{i-1}\geq \frac{-6 d_{C_i}}{m_i}\geq -6d_{C_{i-1}}
\]

Descending inductively therefore gives
\[
 -6d_{C_0}\leq(K_X+\Delta)\cdot C_0<0.
\]
The curve $C_0$ spans $R$ and is a geometric rational image by taking the image of $C_{N}\to C_0$, as required.
\end{proof}

\section{Proof of the cone theorem}

\begin{proof}[Proof of \autoref{thm:main-cone}]
Fix a $(K_X+\Delta)$-negative extremal ray $R$.
Take a crepant $\mathbb{Q}$-factorial dlt modification $f:Y\to X$, with
\[
 f^*(K_X+\Delta)=K_Y+\Delta_Y,
\]
and use \autoref{lem:extremal-lift} to choose an extremal ray $R_Y$ mapping
onto $R$.  It suffices to find an integral geometric rational image $C_Y$
spanning $R_Y$ with
\[
 -6d_{C_Y}\leq(K_Y+\Delta_Y)\cdot C_Y<0.
\]
Indeed, $C_Y$ is not contracted by $f$.  If $C$ is its reduced image and $m$
is the degree of $C_Y\to C$, then crepancy and the projection formula give
\[
 (K_X+\Delta)\cdot C=\frac1m(K_Y+\Delta_Y)\cdot C_Y
 \geq-6\frac{d_{C_Y}}m\geq-6d_C.
\]
Thus  $C$ is a geometric
rational image spanning $R$ and satisfying the required bound.
For the rest of the argument, relabel
$(Y,\Delta_Y,f^*M,R_Y)$ as $(X,\Delta,M,R)$.  Thus $(X,\Delta)$ is
$\mathbb{Q}$-factorial dlt.

If some component $T\subseteq\lfloor\Delta\rfloor$ is negative on $R$,
\autoref{lem:boundary-ray} gives the required curve.  We may thus
assume that every component of $\lfloor\Delta\rfloor$ is nonnegative on $R$.
For a sufficiently small rational $\epsilon>0$, put
\(
 \Delta_\epsilon=\Delta-\epsilon\lfloor\Delta\rfloor.
\)
The pair $(X,\Delta_\epsilon)$ is klt and its adjoint divisor is still negative
on $R$.  Since $M$ is effective and $M\cdot R<0$, some prime component $S$ of
$M$ is negative on $R$.  Applying \autoref{prop:positive-threshold} to the klt
pair $(X,\Delta_\epsilon)$ produces an integral geometric rational image $C$
spanning $R$ such that
\[
 -6d_C\leq(K_X+\Delta_\epsilon)\cdot C<0.
\]
Since every component of $\lfloor\Delta\rfloor$ is nonnegative on $R$,
\[
 (K_X+\Delta)\cdot C
 =(K_X+\Delta_\epsilon)\cdot C
   +\epsilon\lfloor\Delta\rfloor\cdot C
 \geq-6d_C.
\]
On the other hand, $(K_X+\Delta)\cdot C<0$ because $C$ spans $R$. 

 The cone theorem is now a formal consequence of the existence of a curve of
 bounded normalized length in every $K_X+\Delta$-negative extremal ray.  Fix a
 $U$-ample Cartier divisor $H$.  The slice
 \[
  \mathcal P_H:=
  \{z\in\overline{NE}(X/U)\mid H\cdot z=1\}
 \]
 is compact by the argument in
 \autoref{lem:normalized-classes-discrete}.  Its extreme points are precisely
 the intersections of the extremal rays of $\overline{NE}(X/U)$ with
 $\mathcal P_H$.  By the finite-dimensional
 Minkowski--Carath\'eodory theorem, every point of $\mathcal P_H$ is a finite
 convex combination of extreme points.  Grouping the terms whose rays are
 $(K_X+\Delta)$-nonnegative and those whose rays are
 $(K_X+\Delta)$-negative therefore gives
\[
 \overline{NE}(X/U)=
 \overline{NE}(X/U)_{K_X+\Delta\geq0}+\sum_i R_i,
\]
 where the $R_i$ are all the $(K_X+\Delta)$-negative extremal rays.  We have just shown that in each $R_i$ is an integral geometric rational
 image $C_i$ satisfying the stated bound.  The normalized classes
 $\langle C_i\rangle$ belong to the countable discrete set
 $\Lambda_{X/U}$ defined in
 the proof of \autoref{lem:normalized-classes-discrete}.  Since distinct rays
 give distinct normalized classes, there are only countably many such rays.

 We next verify the local finiteness assertion.  Let $A$ be a $U$-ample
 $\mathbb{Q}$-divisor and choose a positive integer $m$ such that $mA$ is a
 $U$-ample Cartier divisor.  If $R_i=\mathbb{R}_{\geq0}[C_i]$ is
 $(K_X+\Delta+A)$-negative, then
\[
  0<(mA)\cdot\langle C_i\rangle
  <-m(K_X+\Delta)\cdot\langle C_i\rangle\leq6m.
\]
 By \autoref{lem:normalized-classes-discrete}, only finitely many normalized
 classes, and hence only finitely many of the rays $R_i$, satisfy this
 inequality.

 Finally, we prove the non-accumulation.  If distinct $R_{i_j}$
 accumulated at a ray $R$ contained in the open half-space
 $(K_X+\Delta)<0$, and if $z_j\in R_{i_j}\cap\mathcal P_H$ converged to
 $z\in R\cap\mathcal P_H$, choose a rational number
\[
  0<\delta<-(K_X+\Delta)\cdot z.
\]
 Then $(K_X+\Delta+\delta H)\cdot z_j<0$ for all sufficiently large $j$.
 This contradicts the finiteness just proved, applied to the $U$-ample
 $\mathbb{Q}$-divisor $\delta H$.  Thus the $R_i$ do not accumulate in
 $\overline{NE}(X/U)_{K_X+\Delta<0}$, and all the assertions follow.
\end{proof}

\begin{proof}[Proof of \autoref{cor:rationality}]
	Let $C_i$ be the curves supplied by \autoref{thm:main-cone}.  The cone
	decomposition gives
	\[
	r=\inf_i\frac{H\cdot C_i}{-(K_X+\Delta)\cdot C_i}.
	\]
	Set
	\[
	n_i:=\frac{H\cdot C_i}{d_{C_i}},
	\qquad
	m_i:=\frac{-a(K_X+\Delta)\cdot C_i}{d_{C_i}}.
	\]
	By \eqref{eq:curve-index}, $n_i$ and $m_i$ are positive integers, while the
	length bound gives $m_i\leq6a$.  Hence
	\[
	\frac{H\cdot C_i}{-(K_X+\Delta)\cdot C_i}
	=\frac{a n_i}{m_i}.
	\]
	Since $m_i$ belongs to the finite set $\{1,\ldots,6a\}$ and $n_i$ is a
	positive integer, the displayed infimum is attained by one of the $C_i$.
\end{proof}

\begin{proof}[Proof of \autoref{cor:pl-contraction}]
	\autoref{thm:main-cone} verifies the hypotheses of \cite[Proposition 4.5]{hacon_four} in our situation.
	\end{proof}

\bibliographystyle{acm}
\bibliography{Library}

\end{document}